\documentclass[12pt, reqno]{amsart}
\usepackage{amsthm}
\usepackage{amsmath,amssymb,amsthm,mathrsfs,amscd}
\usepackage{MnSymbol}
\usepackage{color}
\usepackage[all]{xy}
\usepackage[english]{babel}
\usepackage{graphicx}
\usepackage[latin1]{inputenc}
\usepackage[T1]{fontenc}
\usepackage[final=true]{hyperref}
\usepackage{tikz}
\usetikzlibrary{patterns}
\usetikzlibrary{arrows}
\usepackage{rotating}
\usepackage{cases}
\usepackage{xkeyval}

\theoremstyle{plain}
\newtheorem{thm}{Theorem}[section]
\newtheorem*{thm*}{Theorem}

\newtheorem{theorema}{Theorem}

\newtheorem{lem}[thm]{Lemma}
\newtheorem{cor}[thm]{Corollary}
\newtheorem{prop}[thm]{Proposition}

\theoremstyle{definition}
\newtheorem{ex}[thm]{Example}
\newtheorem{rem}[thm]{Remark}
\theoremstyle{definition}
\newtheorem{defi}[thm]{Definition}

\newcommand{\sym}[2]{\{ #1 , #2\}}

\DeclareMathOperator{\N}{\mathbb{N}}

\DeclareMathOperator{\Z}{\mathbb{Z}}

\DeclareMathOperator{\C}{\mathbb{C}}

\DeclareMathOperator{\sgn}{sgn}

\DeclareMathOperator{\Hom}{Hom}

\DeclareMathOperator{\gr}{gr}

\DeclareMathOperator{\iot}{NA}
\DeclareMathOperator{\CA}{CA}

\newcommand{\dual}[1]{\widehat{#1}}

\DeclareMathOperator{\pp}{p}

\DeclareMathOperator{\CAi}{CA_{\ii}}

\newcommand{\CAid}{\dual{\CA}_{\ii}}

\DeclareMathOperator{\gCAi}{\gr CA_{\ii}}
\DeclareMathOperator{\gCAj}{\gr CA_{\jj}}
\newcommand{\gCAid}{\gr \dual{\CA}_{\ii}}

\newcommand{\gCAjd}{\gr\dual{\CA}_{\jj}}
\DeclareMathOperator{\gioti}{\gr\iot_{\ii}}

\newcommand{\giotid}{\gr\dual{\iot}_{\ii}}
\newcommand{\giotjd}{\gr\dual{\iot}_{\jj}}

\DeclareMathOperator{\NAi}{\iot_{\ii}}

\newcommand{\NAid}{\dual{\iot}_{\ii}}

\DeclareMathOperator{\w0}{w_0}

\DeclareMathOperator{\trl}{\Phi}
\DeclareMathOperator{\trs}{\Psi}

\DeclareMathOperator{\stp}{str}

\DeclareMathOperator{\ii}{\mathbf{i}}
\DeclareMathOperator{\jj}{\mathbf{j}}

\DeclareMathOperator{\trop}{trop}

\DeclareMathOperator{\W}{\mathcal{W}(w_0)}

\DeclareMathOperator{\etaa}{\mathfrak{l}}
\DeclareMathOperator{\luslpotia}{\etaa_{\ii, a}}
\DeclareMathOperator{\lusrpotia}{\etaa_{\ii, -a}}
\DeclareMathOperator{\luslpotja}{\etaa_{\jj, a}}
\DeclareMathOperator{\lusrpotja}{\etaa_{\jj, -a}}
\DeclareMathOperator{\luslpotiald}{\check{\etaa}_{\ii, a}}
\DeclareMathOperator{\lusrpotiald}{\check{\etaa}_{\ii, -a}}

\DeclareMathOperator{\lusrpotjald}{\check{\etaa}_{\jj, -a}}

\newcommand{\lusrpot}[2]{\etaa_{#1, -#2}}

\newcommand{\lusrpotld}[2]{\check{\etaa}_{{#1}, -{#2}}}

\DeclareMathOperator{\zetaa}{\mathfrak{s}}
\DeclareMathOperator{\stlpotia}{\zetaa_{\ii, a}}
\DeclareMathOperator{\strpotia}{\zetaa_{\ii, -a}}
\DeclareMathOperator{\stlpotja}{\zetaa_{\jj, a}}
\DeclareMathOperator{\strpotja}{\zetaa_{\jj, -a}}
\DeclareMathOperator{\stlpotiald}{\check{\zetaa}_{\ii, a}}
\DeclareMathOperator{\strpotiald}{\check{\zetaa}_{\ii, -a}}

\newcommand{\stlpot}[2]{\zetaa_{#1, #2}}

\DeclareMathOperator{\GN}{G/\mathcal{N}}

\DeclareMathOperator{\starop}{\star op}

\DeclareMathOperator{\Lg}{^{L}\mathfrak{g}}

\DeclareMathOperator{\TT}{\mathbb{T}}

\DeclareMathOperator{\TTil}{{\mathbb{L}}_{\ii}}
\DeclareMathOperator{\TTjl}{{\mathbb{L}}_{\jj}}
\newcommand{\TTis}{{\mathbb{S}}_{\ii}}

\DeclareMathOperator{\TTilp}{\mathbb{L}_{\jj}}
\DeclareMathOperator{\gTTil}{\gr\TTil}
\DeclareMathOperator{\gTTis}{\gr{{\mathbb{S}}_{\ii}}}
\DeclareMathOperator{\gTTjl}{\gr\TTjl}
\DeclareMathOperator{\gTTjs}{\gr{{\mathbb{S}}_{\jj}}}

\newcommand{\TTjs}{{\mathbb{S}}_{\jj}}

\newcommand{\xv}[1]{\text{x}_{#1}}
\newcommand{\xvc}[2]{\text{x}_{#1, #2}}

\newcommand{\TTix}{\dual{\mathbb{T}}_{\ii}}

\newcommand{\TTixp}{\dual{\mathbb{T}}_{\jj}}
\DeclareMathOperator{\TTia}{\mathbb{T}_{ \ii}}
\DeclareMathOperator{\TTiap}{\mathbb{T}_{\jj}}
\newcommand{\TTx}{\dual{\mathbb{T}}}
\DeclareMathOperator{\TTse}{\mathbb{T}_{\Sigma}}
\DeclareMathOperator{\TTsea}{\mathbb{T}_{ \Sigma}}
\newcommand{\TTsex}{\dual{\mathbb{T}}_{\Sigma}}
\newcommand{\TTsexp}{\dual{\mathbb{T}}_{\Sigma'}}
\DeclareMathOperator{\SE}{\Sigma}
\DeclareMathOperator{\SEp}{\Sigma'}
\DeclareMathOperator{\TTsep}{\mathbb{T}_{\Sigma'}}

\newcommand{\ttsx}{\dual{\mu}_{\SEp}^{\SE}}
\DeclareMathOperator{\ttsa}{{\mu}_{\SEp}^{\SE}}
\newcommand{\ttix}{\dual{\mu}_{\jj}^{\ii}}
\DeclareMathOperator{\ttia}{{\mu}_{\jj}^{\ii}}

\newcommand{\xse}[1]{x^{\SE}_{#1}}

\newcommand{\trod}[1]{\left[#1 \right]_{\trop}}
\newcommand{\tro}[1]{[#1]_{\trop}}

\DeclareMathOperator{\lci}{\mathcal{L}_{\ii}}

\DeclareMathOperator{\glci}{\gr\mathcal{L}_{\ii}}
\DeclareMathOperator{\glcj}{\gr\mathcal{L}_{\jj}}
\DeclareMathOperator{\glcild}{\gr\mathcal{L}^{\vee}_{\ii}}

\DeclareMathOperator{\sci}{\mathcal{S}_{\ii}}

\DeclareMathOperator{\gsci}{\gr\mathcal{S}_{\ii}}

\DeclareMathOperator{\gscild}{\gr\mathcal{S}^{\vee}_{\ii}}

\newcommand{\xcs}{\dual{\mathcal{C}}_{{\Sigma}}}
\newcommand{\xci}{\dual{\mathcal{C}}_{{\ii}}}

\newcommand{\acs}{{\mathcal{C}}_{{\Sigma}}}
\newcommand{\aci}{{\mathcal{C}}_{{\ii}}}

\newcommand\restr[2]{{
  \left.\kern-\nulldelimiterspace 
  #1 
  \vphantom{\big|}
  \right|_{#2}
  }}
  \makeatletter
\renewcommand*\env@cases[1][1.2]{
  \let\@ifnextchar\new@ifnextchar
  \left\lbrace
  \def\arraystretch{#1}
  \array{@{}l@{\quad}l@{}}
}
\makeatother

\frenchspacing
\begin{document}

\title[Polyhedral parametrizations \& cluster duality]{Polyhedral parametrizations of \\ canonical bases \& cluster duality}

\author{Volker Genz}
\address{Mathematical Institute, Ruhr-Universit{\"a}t Bochum}
\email{volker.genz@gmail.com}

\author{Gleb Koshevoy}
\address{CEMI Russian Academy of Sciences, MCCME and Poncelet laboratory (CNRS and IMU)}
\email{koshevoy@cemi.rssi.ru}

\author{Bea Schumann}
\address{Mathematical Institute, University of Cologne}
\email{bschuman@math.uni-koeln.de}

\begin{abstract} 
We establish the relation of the potential function constructed by Gross-Hacking-Keel-Kontsevich's and Berenstein-Kazhdan's decoration function on the open double Bruhat cell in the base affine space $\GN$ of a simple, simply connected, simply laced algebraic group $G$. As a byproduct we derive explicit identifications of polyhedral parametrization of canonical bases of the ring of regular functions on $\GN$ arising from the tropicalizations of the potential and decoration function with the classical string and Lusztig parametrizations. 
\end{abstract}

\maketitle

\section*{introduction}
Let $G$ be a simple, simply connected, simply laced algebraic group over $\mathbb{C}$, $B\subset G$ a Borel subgroup with unipotent radical $\mathcal{N}$.

There is a cluster space $\mathcal{A}$ (\cite{BFZ2}) and a dual cluster space $\mathcal{X}$ (\cite{FG}) associated to the open double Bruhat cell in the base affine space $\GN$. We are interested in the following functions both playing a crucial role in the study of canonical vector space bases of the ring of regular functions $H^0(\GN,\mathcal{O}_{\GN})$ on $\GN$.

On the one hand Berenstein-Kazhdan's decoration function $f^B$ defined in \cite{BK2}, a regular function on $\mathcal{A}$. The decoration function is a crucial part of the construction of a decorated geometric crystal and thus intimately connected to the canonical basis $\mathcal{B}_{\text{can}}$ of $H^0(\GN,\mathcal{O}_{\GN})$ independently constructed by Kashiwara and Lusztig's \cite{Lu, Ka}.

On the other hand, a remarkable vector space basis $\mathbb{B}$ was recently constructed (up to a natural conjecture, see Remark \ref{GHKKrem}) by Gross-Hacking-Keel-Kontsevich \cite{GHKK} using methods in mirror symmetry. An important ingredient in the construction of $\mathbb{B}$ is a regular function $W$ on $\mathcal{X}$ which we call the GHKK-potential.

We relate the GHKK-potential to the decoration function as follows.

\begin{theorema}\label{thma} There exists a regular map $\varphi: \mathcal{A} \rightarrow \mathcal{X}$ such that 
$${f^B} = {W}\circ \varphi.$$
\end{theorema}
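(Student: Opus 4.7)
The natural candidate for $\varphi$ is the Fock-Goncharov ensemble map $p \colon \mathcal{A} \to \mathcal{X}$, which in any cluster chart $\Sigma$ is the monomial map $X_i = \prod_j A_j^{\varepsilon_{ij}}$ determined by the exchange matrix of $\Sigma$. Setting $\varphi = p$, the plan is to verify $f^B = W \circ p$ in a single cluster chart of $\mathcal{A}$ and then invoke regularity of both sides on $\mathcal{A}$ to extend the identity globally.

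To carry out this verification I would fix a reduced expression $\mathbf{i}$ of $w_0$ and work in the associated seed $\Sigma_{\mathbf{i}}$. The decoration function admits a decomposition $f^B = \sum_a f^B_a$ indexed by simple roots $a$, each summand being (by the minor formulas of \cite{BK2}) a Laurent polynomial in the $\mathcal{A}$-variables of $\Sigma_{\mathbf{i}}$. The GHKK-potential similarly decomposes as $W = \sum_a W_a$, with one summand for each frozen direction of the $\mathcal{X}$-cluster structure, and each $W_a$ is the Laurent polynomial in $X$-coordinates read off from its broken-line/theta-function definition. The substitution induced by $p$ turns $W_a$ into a sum of Laurent monomials in the $\mathcal{A}$-coordinates, so the identity reduces to the term-by-term equalities $f^B_a = W_a \circ p$.

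Each such equality amounts to a bijection between the monomials of $W_a$ (indexed naturally by broken lines, or equivalently by the $g$-vector fan) and the monomials of $f^B_a$ (indexed naturally by subexpressions or string data of $\mathbf{i}$), matching both multidegrees and coefficients (which should all be $+1$). Once the equality is checked for the single chart $\Sigma_{\mathbf{i}}$, regularity of $f^B$ on $\mathcal{A}$ and of $W \circ p$ on $\mathcal{A}$ forces the identity on every chart.

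The main obstacle is precisely this monomial bijection. The terms of $f^B_a$ come from generalized minors on the open double Bruhat cell and carry Chamber Ansatz/string combinatorics, while the terms of $W_a$ arise from mirror-symmetric input on $\mathcal{X}$ and are naturally encoded by tropical geometry. Aligning these two combinatorial descriptions — and pinning down conventions for quiver orientation, choice of frozen variables, and signs — is where the real content of Theorem A lies; the remaining ingredients (existence of $p$, regularity, chart-independence) are then formal consequences of the cluster-theoretic machinery.
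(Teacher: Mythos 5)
There is a genuine gap: the candidate map is wrong. The identity $f^B = W\circ p$ fails for the Fock--Goncharov ensemble map $p$, and no choice of convention (orientation, frozen--frozen extension of the form) can repair it. The obstruction is visible already in the simplest term: in the chart $\TTia$ attached to a reduced word $\ii$ one has $\restr{f^B_{i_N}}{\TTia}=x_{N^-}x_N^{-1}$, which genuinely involves the frozen variable $A_N=x_N$, while $W_{i_N}\circ p=(p^*X_N)^{-1}$ and $p^*X_N=\prod_\ell A_\ell^{\varepsilon_{N\ell}}$ can never involve $A_N$, since the exponent matrix of $p$ is skew-symmetric and hence has zero diagonal (and in this paper's convention frozen--frozen arrows are even erased). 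Concretely, for $\mathfrak{g}=\mathrm{sl}_3$ and $\ii=(1,2,1)$ one computes $W_1\circ p=x_1^{\pm1}$ whereas $f^B_1=x_1/x_3$, and summing all terms does not cure the mismatch either (cross terms produce monomials like $x_1$ with coefficient $2$ that do not occur in $f^B$). So the reduction you propose --- a term-by-term monomial bijection between $W_a\circ p$ and $f^B_a$ --- cannot succeed; the ``main obstacle'' you flag is not merely hard, it is false for $\varphi=p$. This is consistent with the paper's own remark that the correct $\varphi$ is a nontrivial \emph{modification} of the $p$-map (and is not injective).

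What the paper actually does is different in two essential ways. First, the chart-wise map is $\varphi_{\ii}=\giotid\circ\mathcal{D}\circ\gCAi$, a Chamber-Ansatz-type monomial map twisted by the Cartan matrix (the map $\mathcal{D}$) and the involution $a\mapsto a^*$; its components involve frozen variables with nonzero ``diagonal'' exponents (e.g.\ its $X_3$-component is $x_3/x_1$ in the $\mathrm{sl}_3$ example), which is exactly what $p$ cannot produce. The identity $\restr{f^B}{\TTia}=\restr{W}{\TTix}\circ\varphi_\ii$ then follows from Theorem \ref{lempot} rather than from an ad hoc monomial matching. Second, since $\varphi_\ii$ is only defined on the charts indexed by reduced words, one must glue: compatibility under $2$- and $3$-moves (Lemma \ref{diagwlX}) plus Zelevinsky's result that the tori $(\TTia)_{\ii\in\W}$ cover the open double Bruhat cell up to codimension $2$ yield a regular map $\varphi$ on all of $\mathcal{A}$. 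Your ``verify in one chart and extend by regularity'' step would be fine for an identity between two globally defined regular functions, but here the global existence and regularity of $\varphi$ itself is part of what must be proved, and it is precisely where the codimension-$2$ argument enters.
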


The cluster spaces $\mathcal{A}$ and $\mathcal{X}$ are unions of open tori $\mathcal{A}=\bigcup_{\Sigma}\TTsea$, $\mathcal{X}=\bigcup_{\Sigma}\TTsex$, which are glued via certain birational transformations, called $\mathcal{A}$- and $\mathcal{X}$-cluster mutations, respectively. The elements $\SE$ in the common index set of the two dual toric systems are called seeds. The families of charts equip $\mathcal{A}$ and $\mathcal{X}$ with the structure of a positive variety admitting tropicalization.

The functions $f^B$ and $W$ lead to polyhedral parametrization of $\mathcal{B}_{\text{can}}$ and $\mathbb{B}$, respectively, one for each seed $\SE$. By \cite{GHKK} the integer polyhedral cone
$$ \xcs=\{x \in [\TTsex]_{trop} \mid [\restr{W}{\TTsex}]_{trop}(x)\ge 0\}$$
parametrizes $\mathbb{B}$. By \cite{BK2} the tropicalization of the decoration function $f^B$ cuts out an integer polyhedral cone 
$$\acs =\{x \in [\TTsea]_{trop} \mid [\restr{f^B}{\TTsea}]_{trop}(x)\ge 0\}$$
which parametrizes Lusztig's canonical basis of the base affine space of the Langlands dual group of $G$.

Theorem \ref{thma} is deduced by studying the interplay of Gross-Hacking-Keel-Kontsevich's polyhedral parametrization $\xcs$ and the parametrization arising from the tropicalization of the Berenstein-Kazhdan decoration function $\acs$ with classical parametrizations of Lusztig's canonical basis obtained by Lusztig and Kashiwara. 

Both Lusztig's and Kashiwara's construction yield a family of polyhedral parametrizations, one for each reduced word $\mathbf i$ of the longest element $\w0$ of the Weyl group of $G$, by the graded string cone $\gr\mathcal{S}_{\mathbf i}$ and the graded cone of Lusztig's parametrization $\gr\mathcal{L}_{\mathbf i}$, respectively. We related $\gsci$ and $\glci$ to the functions $f^B$ and $W$ by introducing regular maps $\zetaa_{\ii}$ and $\etaa_{\ii}$ satisfying
\begin{align*}
\gr\mathcal{S}_{\mathbf i}&=\{x\in [T]_{trop} \mid [\zetaa_{\ii}]_{trop}(x) \ge 0\}, \\
\gr\mathcal{L}_{\mathbf i}&=\{x\in [T]_{trop} \mid [\etaa_{\ii}]_{trop}(x) \ge 0\}.
\end{align*}
We denote the corresponding objects for the Langlands dual group of $G$ by $\zetaa^{\vee}_{\ii}$, $\etaa^{\vee}_{\ii}$, $\gr\mathcal{S}^{\vee}_{\mathbf i}$ and $\gr\mathcal{L}^{\vee}_{\mathbf i}$

The interplay between the various parametrizations can be summarized in the following theorem.

\begin{theorema}[Theorem \ref{lempot}, Theorem \ref{string2}, Lemma \ref{unicones}]\label{thmb} For every reduced word $\mathbf i$ there are toric charts $\TTia$ and $\TTix$ of $\mathcal{A}$ and $\mathcal{X}$ and explicit isomorphisms $\gCAi$, $\gioti$, $\gCAid$ and $\giotid$ with
	\begin{align*}
	\zetaa_{\ii}&=\restr{W}{\TTix} \circ \gCAid,&\etaa_{\ii}&=\restr{W}{\TTix} \circ \giotid, \\
	\restr{f^B}{T_{\ii}} &=\zetaa^{\vee}_{\ii}\circ \gr\NAi,&\restr{f^B}{T_{\ii}}&=\etaa^{\vee}_{\ii}\circ \gr\CAi.
	\end{align*}
Furthermore, we obtain the following family of commuative diagrams of linear maps indexed by reduced words
$$ \xymatrix{ \gscild \ar@{^{(}->}[d] & & \aci\ar[ll]_{\tro{\gioti}}^{\widetilde{}} \ar[rr]^{\tro{\gCAi}}_{\widetilde{}} \ar@{^{(}->}[d] & & \glcild \ar@{^{(}->}[d]\\
	\gsci\ar[rr]^{\tro{\gCAid}}_{\widetilde{}}  & & \xci   & & \glci\ar[ll]_{\tro{\giotid}}^{\widetilde{}}. 
}$$
\end{theorema}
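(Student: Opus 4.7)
My plan is to split Theorem~\ref{thmb} into three pieces: (a) fixing the toric charts $\TTia, \TTix$ together with the four monomial isomorphisms; (b) establishing the four potential identities relating $W, f^B$ to $\zetaa_{\ii}, \etaa_{\ii}, \zetaa^{\vee}_{\ii}, \etaa^{\vee}_{\ii}$; (c) deducing the commutative diagram of cones by tropicalization. These three pieces correspond exactly to the three results Theorem~\ref{lempot}, Theorem~\ref{string2} and Lemma~\ref{unicones} invoked in the statement, so I would prove each in turn.

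For (a), I take $\TTia\subset\mathcal{A}$ and $\TTix\subset\mathcal{X}$ to be the Berenstein-Fomin-Zelevinsky cluster charts attached to the reduced word $\mathbf i$: coordinates on $\TTia$ are the generalized minors producing the $\mathbf i$-factorization on the open double Bruhat cell of $\GN$, and coordinates on $\TTix$ are the associated Fock-Goncharov $\mathcal{X}$-variables. The four maps $\gCAi, \gioti$ and their Langlands duals $\gCAid, \giotid$ are defined as the monomial isomorphisms between the ambient torus $T$ carrying the graded cones $\gsci, \glci, \gscild, \glcild$ and the cluster tori $\TTia, \TTix$; their defining formulas are read off from the classical changes of coordinates between string, Lusztig and BFZ parametrizations.

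Step (b) is the substantive core of the proof. For $\zetaa_{\ii}=\restr{W}{\TTix}\circ\gCAid$, I would expand $W$ in the $\mathbf i$-cluster chart on $\TTix$ as a finite sum of theta functions supported at the frozen vertices, pull back along the monomial map $\gCAid$, and compare term by term with the explicit description of $\zetaa_{\ii}$ as the potential whose non-negativity cuts out the graded string cone. The identity $\etaa_{\ii}=\restr{W}{\TTix}\circ\giotid$ is analogous after replacing string by Lusztig coordinates. Dually, I would expand Berenstein-Kazhdan's decoration function $f^B$ on $\TTia$ as the standard sum of simple-root minors and match it, after pullback along $\gr\NAi$ respectively $\gr\CAi$, against the Langlands-dual potentials $\zetaa^{\vee}_{\ii}$ and $\etaa^{\vee}_{\ii}$. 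The hardest verification is on the $\mathcal{X}$-side: the GHKK potential is defined intrinsically by broken lines in the cluster scattering diagram, whereas $\zetaa_{\ii}$ and $\etaa_{\ii}$ are built from the combinatorics of $\mathbf i$, so one must exhibit a seed in which every theta function contributing to $W$ collapses to a single Laurent monomial matching a summand of $\zetaa_{\ii}$ or $\etaa_{\ii}$.

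For (c), all four maps $\gCAid, \giotid, \gr\NAi, \gr\CAi$ are subtraction-free isomorphisms of positive tori, hence their tropicalizations are integral-linear isomorphisms of cocharacter lattices. Applying tropicalization to the four identities of step (b) transports the defining inequalities of $\gsci, \glci, \gscild, \glcild$ into inequalities on $\aci$ and $\xci$, so the horizontal arrows of the diagram restrict to linear isomorphisms onto these subcones, while the vertical inclusions follow by a direct comparison of inequality systems on each cone. Commutativity of the two resulting triangles is automatic from the functoriality of tropicalization once the underlying rational identities of step (b) are in place, and the parallel bookkeeping of BK's decoration function under BFZ twist and mutation is what makes the Langlands-dual version of (b) non-trivial.
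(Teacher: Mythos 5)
Your skeleton matches the paper's: the theorem is exactly the conjunction of Theorem \ref{lempot}, Theorem \ref{string2} and Lemma \ref{unicones}, and your computational inputs (optimized seeds making each $W_a$ a single monomial, the generalized-minor expression of $f^B$ on the Berenstein--Fomin--Zelevinsky chart, monomial Chamber-Ansatz maps, then tropicalization) are the same as Propositions \ref{locpotGHKK1} and \ref{apotex}. The genuine gap is in your step (b): you propose to ``compare term by term with the explicit description'' of $\zetaa_{\ii}$ and $\etaa_{\ii}$, but no such closed-form description exists for half of their components. The functions $\zetaa_{\ii,a}$ ($a\in I$, the string-cone part) and $\etaa_{\ii,-a}$ (the Lusztig grading part) are \emph{defined} only recursively: by their value on the chart where $a$ is the last letter of the reduced word, together with compatibility under the transition maps $\trs^{\ii}_{\jj}$, $\trl^{\ii}_{\jj}$ for $2$- and $3$-moves (the paper stresses that in general no explicit formula is available). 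A proof of $\zetaa_{\ii}=\restr{W}{\TTix}\circ\gCAid$ and $\restr{f^B}{\TTia}=\etaa^{\vee}_{\ii}\circ\gCAi$ must therefore show that the monomial maps $\gCAid,\giotid,\gCAi,\gioti$ intertwine these transition maps with the $\mathcal{A}$- and $\mathcal{X}$-cluster mutations attached to $2$- and $3$-moves (Lemmas \ref{diagwlX} and \ref{transstringcluster} in the paper), which reduces every index $a$ to the last-letter base case where both sides are single monomials. Your proposal never engages with this recursion, and without it the term-by-term comparison cannot even be set up; note also that the same mechanism is needed on the $W$-side, since $\Sigma_{\ii}$ is optimized only for the frozen vertex of the last letter $i_N$, so the case $a>0$, $a\neq i_N$ is handled by changing the reduced word, not by a single seed in which ``every theta function collapses''.

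Two smaller points. First, the unimodularity in step (c) is not automatic from positivity: the maps are monomial, so tropicalization is linear, but one still has to check that the exponent matrices lie in $GL_{n+N}(\mathbb{Z})$; the paper does this in Lemma \ref{unicones} by exhibiting triangular matrices with diagonal entries $\pm1$ (indeed, that the maps are isomorphisms at all is proved there, not assumed). Second, passing from the function identities to equalities of cones uses that the tropicalized families really cut out $\gsci$, $\glci$ and their Langlands-dual versions; this rests on the crystal-theoretic Propositions \ref{stringpos}, \ref{wstringpos} and \ref{facts}, which you may quote as established but should cite rather than fold into the word ``explicit description''.
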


We obtain Theorem \ref{thma} using Theorem \ref{thmb} and a result  of Zelevinsky stating that $\mathcal{A}$ is covered by the tori associated to reduced words up to codimension $2$. Note that there are two candidates for the map $\varphi$ arising from Theorems \ref{thmb}. We show that both candidates coincide.

Another consequence of Theorem \ref{thmb} is a lattice isomorphism from the graded string cone to the graded cone of Lusztig's parametrization, recovering a result of Caldero-Marsh-Morier-Genoud. 

There are two natural types of inequalities for both $\gsci$ and $\glci$: One type yields the inequalities for a polyhedral parametrization of a canonical basis of the ring $H^0(\mathcal{N},\mathcal{O}_{\mathcal{N}})$ of regular functions on the unipotent radical.
We call these inequalities the cone inequalities for the sake of this introduction. The other type of inequalities describe the graded lift of a polyhedral parametrization of $H^0(\mathcal{N},\mathcal{O}_{\mathcal{N}})$ 
to a polyhedral parametrization of a canonical basis of
 $H^0(\GN,\mathcal{O}_{\GN})$, called here grading inequalities. 

We show that under Caldero-Marsh-Morier-Genoud's map the cone inequalities of the graded cone of Lusztig's parametrization is mapped to the grading inequalities of the graded string cone and vise versa. We further give an affine unimodular map between the corresponding weight polytopes.

In certain cases, polyhedral parametrizations of canonical bases of rings of regular functions on a variety $X$ by a cone lead to flat degenerations of $X$ to the toric variety defined by the cone. In the case of flag varieties, an overview of many such cases is given in \cite{FFL2}. 

Theorem \ref{thmb} implies that, in the case of the base affine space, the toric fibers appearing in the degeneration construction by Caldero (\cite{C}) and Alexeev-Brion (\cite{AB}) also appear in the degenerations constructed by Gross-Hacking-Keel-Kontsevich (\cite{GHKK}). In the special case of $G=\text{SL}_n(\mathbb{C})$ this was proven previously in \cite{BF}.

Moreover, toric degenerations associated to the graded cone of Lusztig's parametrization and the graded string cone where constructed in \cite{FFL1}. Hence Theorem \ref{thmb} provides further evidence that there should be a natural connection between a subclass of Fang-Fourier-Littelmann's toric degenerations constructed in op. cit. and Gross-Hacking-Keel-Kontsevich's toric degenerations constructed by cluster duality (see \cite[10.1]{FFL2}).

\section*{Acknowledgment}
We would like to thank Peter Littelmann for several valuable explanations and discussions. Further we would like to thank Xin Fang, Valentin Rappel and Christian Steinert for helpful comments. This work is part of a project in the SFB/TRR 191 `Symplectic Structures in Geometry,
Algebra and Dynamics' from which the authors gratefully acknowledge (partial) financial support. G.K. was further supported by
the grant RSF 16-11- 10075.

\section{Background and Notations}

\subsection{Simply-laced Lie algebras}
Let $\mathfrak{g}$ be simple, simply laced complex Lie algebra of rank $n$, $I:=[n]:=\{1,\ldots,n\}$, $C=(c_{i,j})_{i,j\in I}$ its Cartan matrix and $\mathfrak{h}\subset \mathfrak{g}$ a Cartan subalgebra.
We chose simple coroots $\{h_a\}_{a\in I}\in\mathfrak{h}$ and simple roots $\Delta^+=\{ \alpha_a \}_{a\in I}\subset \mathfrak{h}^*$ with $\alpha_a (h_b)= c_{a,b}$
and denote by $\Delta^+\subset\mathfrak{h}^*$ the positive roots associated to $\{\alpha_a\}$.

The fundamental weights $\{\omega_a\}_{a\in I}\subset\mathfrak{h}^*$ of $\mathfrak{g}$ are given by $\omega_a(h_j)=\delta_{a,j}$. We denote by $P=\langle \omega_a \mid a\in[n] \rangle_{\mathbb{Z}}$ the weight lattice of $\mathfrak{g}$ and by $P^+=\langle \omega_a \mid a\in[n] \rangle_{\mathbb{N}}\subset P$ the set of dominant weights.

The Langlands dual Lie algebra $^L\mathfrak{g}$ of $\mathfrak{g}$ is the simple, simply laced complex Lie algebra with Cartan matrix $C$, Cartan subalgebra $\mathfrak{h}^*$, simple roots $\{h_a\}_{a\in I}$, simple coroots $\{\alpha_a\}_{a\in I}$ and $h_a (\alpha_b)=c_{a,b}$.
The fundamental weights of $^L\mathfrak{g}$ are $\{\omega^{\vee}_a\}_{a\in I}\subset \mathfrak{h}$ where $\alpha_a (\omega^{\vee}_b) = \delta_{a,b}$.

\subsection{Weyl groups and reduced words}
The Weyl group $W$ of $\mathfrak{g}$ is a Coxeter group generated by the simple reflections $s_a$ ($a \in I$) with relations 
\begin{align*} s_i^2 &=id, \\
 s_{i_1}s_{i_2} & =s_{i_2}s_{i_1} \qquad \, \, \, \text{if }c_{i_1,i_2}=0 \quad \text{ (commutation relation)},  \\
s_{i_1}s_{i_2}s_{i_1}&=s_{i_2}s_{i_1}s_{i_2} \quad \text{ if }c_{i_1,i_2}=-1 \quad \text{ (braid relation)}.
\end{align*}

The group $W$ has a unique longest element $\w0$ of length $N=\#\Delta^+$. For a reduced expression $s_{i_1} \cdots s_{i_N}$ of $\w0$ we write $\ii:=(i_1,\ldots, i_N)$ and call $\ii$ a \emph{reduced word} (for $\w0$). The set of reduced words for $\w0$ is denoted by $\mathcal{W}(\w0)$.

We have two operations on the set of reduced words $\W$.
\begin{defi}
A reduced word $\jj=(j_1,\ldots,j_N)$ is defined to be obtained from $\ii=(i_1,\ldots,i_N)\in \W$ by a \emph{$2$-move at position $k\in [N-1]$} if $i_{\ell}=j_{\ell}$ for all $\ell\notin \{k,k+1\}$, $(i_{k+1}, i_{k})=(j_{k},j_{k+1})$ and $c_{i_{k},i_{k+1}}=0$.

A reduced word $\jj=(j_1,\ldots,j_N)$ is defined to be obtained from $\ii=(i_1,\ldots,i_N)\in \W$ by a \emph{$3$-move at position $k\in [N-1]$} if $i_{\ell}=j_{\ell}$ for all $\ell\notin \{k-1,k,k+1\}$, $j_{k-1}=j_{k+1}=i_k$, $j_k=i_{k-1}=i_{k+1}$ and $c_{i_{k},i_{k+1}}=-1$.

\end{defi}

We call a total ordering $\le$ on $\Delta^+$ \emph{convex} if for $\beta_1,\beta_2,\beta_1+\beta_2\in \Delta^+$ either $\beta_1 \le \beta_1+\beta_2 \le \beta_2$ holds or $\beta_2 \le \beta_1+\beta_2 \le \beta_1$.
By \cite[Theorem p. 662]{P94} the set of total convex ordering is in natural bijection with the set of reduced words. Namely, for a reduced word $\ii=(i_1,\ldots,i_N)\in\W$ the total ordering \begin{equation*}
\alpha_{i_1}<_{\ii} s_{i_1}(\alpha_{i_2}) <_{\ii} \ldots <_{\ii} s_{i_1}\cdots s_{i_{N-1}}(\alpha_{i_N})
\end{equation*}
on $\Delta^+$
is convex and every convex ordering on $\Delta^+$ arises that way. We write $\Delta^+_{\ii}=\{\beta_1,\beta_2,\ldots,\beta_N\}$ for the set of positive roots ordered with respect to the convex ordering $<_{\ii}$ and throughout identify $\Delta_{\ii}^+$ with $[N]$ via
\begin{equation}\label{posident}
\beta_k \mapsto k.
\end{equation}
We make use of the following alternative labeling of $\Delta_{\ii}^+$ throughout.
\begin{defi}\label{levelroot}
For $a\in I$ we write 
$\{\beta_{\ell} \in \Delta_{\ii}^+ \mid i_{\ell}=a\}=\{\beta_{a,1}, \dots, \beta_{a,m_a}\}$ with
$m_a=m_{a,\ii}\in\N$ and $\beta_{a,1} <_{\ii} \dots <_{\ii}\beta_{a,m_a}$.
\end{defi}

The Weyl group $W$ acts on $\mathfrak{h}^*$ via
$$s_a \mu = \mu- \mu(h_a)\alpha_a \quad a\in I, \mu \in \mathfrak{h}^*.$$
For $a\in I$ we denote by $a^*$ the element of $I$ such that 
\begin{equation}\label{astardef}
\w0(\alpha_a)=-\alpha_{a^*}
\end{equation}

\subsection{Simply-connected algebraic groups}

Let $G$ be the simple simply-connected complex algebraic group with Lie algebra $\mathfrak{g}$. Let $T\subset G$ be a maximal torus with Lie algebra $\mathfrak{h}$. For $a\in I$, let $\varphi_a:SL_2 \rightarrow G $ be the embedding of $SL_2$ corresponding to the simple root $\alpha_a$. We embed the Weyl group $W\simeq \text{Norm}_G(T)/T$ of $\mathfrak{g}$ as a set into $\text{Norm}_G(T)$ via
\begin{equation}\label{Wemb}
s_a\mapsto \bar{s}_a:=\varphi_a\left(\begin{matrix} 0 & -1 \\ 1 & 0 \end{matrix}\right)\in \text{Norm}_G(T).
\end{equation}

We denote by $\mathcal{N}$ and $\mathcal{N}^-$ the maximal unipotent subgroup of $G$ generated by $\{\varphi_a\left(\begin{smallmatrix} 1 & t \\ 0 & 1 \end{smallmatrix}\right) \,|\, a\in I, t\in\ C\}$ and $\{\varphi_a\left(\begin{smallmatrix} 1 & 0 \\ t & 1 \end{smallmatrix}\right) \,|\, a\in I, t\in\C \}$ respectively and set $B=T\mathcal{N}$ and $B^{-}=T\mathcal{N}^-$.

\subsection{Tropicalization}\label{sec:trop}
We start by recalling the notion of tropicalization. Let $\mathbb{G}_m$ be the multiplicative group. For a torus  $\mathbb{T}=\mathbb{G}_m^k$ we denote by $[\mathbb{T}]_{trop}=\Hom(\mathbb{G}_m,\mathbb{T})=\mathbb{Z}^k$ its cocharacter lattice. A positive (i.e. subtraction-free) rational map $f$ on $\mathbb{T}$, $f(x)=\frac{\sum_{u\in I} c_u x^u}{\sum_{u\in J} d_u x^u}$ with {$c_u, d_u \in \mathbb{R_{+}}$}, gives rise to a piecewise-linear map 
$$[f]_{trop} : [\mathbb{T}]_{trop} \rightarrow [\mathbb{G}_m]_{trop}=\mathbb{Z}, \quad x\mapsto \min_{u \in I} \left\langle x,u\right\rangle - \min_{u\in J} \left\langle x,u\right\rangle,$$
where $\langle \cdot ,\cdot \rangle$ is the standard inner product of $\mathbb{Z}^k$. We call $[f]_{trop}$ the \emph{tropicalization} of $f$. More generally,  for a positive rational map 
$$f=(f_1, \dots, f_\ell) : \mathbb{G}_m^k \dashedrightarrow \mathbb{G}_m^{\ell}$$
we define its tropicalization as $$[f]_{trop}:=([f_1]_{trop}, \ldots ,[f_{\ell}]_{trop}): [\mathbb{G}_m^k]_{trop} \rightarrow [\mathbb{G}_m^{\ell}]_{trop}.$$
The function $f$ is called a \emph{geometric lift of }$f$. Note that there a several choices of geometric lifts of a piecewise-linear function.

\section{Lusztig's parametrization}
\subsection{Lusztig's parametrization of the canonical basis}
We denote by $U_q^-$ the negative part of the quantized enveloping algebra of $\mathfrak{g}$.
Let $\ii=(i_1,\ldots,i_N)\in \W$ be a reduced word and $\{\beta_1,\ldots,\beta_N\}=\Delta^+_{\ii}$. In \cite{Lu} a PBW-type basis
$$B_{\ii}=\left\{F_{\ii,\beta_1}^{(x_{\beta_1})} \cdots F_{\\i,\beta_N}^{(x_{\beta_N})} \mid (x_{\beta_1},\ldots, x_{\beta_N})\in \mathbb{N}^{\Delta_{\ii}^+}\right\},$$
of $U_q^-$ is defined, where 
\begin{equation*} F_{\ii, \beta_j}=T_{i_1}\cdots T_{i_{j-1}}F_j\end{equation*} 
is given via the braid group action $T_i$ defined in \cite[Section1.3]{Lu2}, $X^{(m)}$ is the $q$-divided power defined by $X^{(m)}:=\frac{X^m}{[m][m-1]\cdots [2]}$ and $[m]:=q^{m-1}+q^{m-2}+\ldots +q^{-m+1}$.

By \cite{Lu} $B_{{\ii}}$ is basis of $U_q^{-}$. The $\mathbb{Z}[q]$-lattice $\mathfrak{L}$ spanned by $B_{{\bf i}}$ is independent of the choice of reduced expression $\ii$, as is the induced basis $B:=\pi (B_{\ii})$ of $\mathfrak{L}/q\mathfrak{L}$, where $\pi: \mathfrak{L}\rightarrow \mathfrak{L}/ q\mathfrak{L}$ is the canonical projection. There exists a unique basis ${\bf B}$ of $\mathfrak{L}$ whose image under $\pi$ is $B$ and which is stable under the $\mathbb{Q}$-algebra automorphism preserving the generators of $U_q^-$ and sending $q$ to $q^{-1}$. We call ${\bf B}$ the \emph{canonical basis} of $U_q^-$.

\begin{defi} For $\ii\in \W$ and $x=(x_{1}, \ldots, x_{N})\in \mathbb{N}^{N}$, we denote the element $F_{\ii,\beta_1}^{(x_{1})} \cdots F_{\\i,\beta_N}^{(x_{N})}$ by $F^{x}$ and call $x$ its \emph{$\ii$-Lusztig datum}.
Using identification \eqref{posident} we write 
 $$\lci=\mathbb{N}^{\Delta^+_{\ii}}=\mathbb{N}^N$$
for the cone of all $\ii$-Lusztig data. We call $\lci$ the \emph{cone of Lusztig's parametrization} of the canonical basis.
\end{defi}

Lusztig's canonical basis has favorable properties. In particular it projects to a basis of every irreducible finite dimensional $U_q^{-}$-rep\-re\-sen\-ta\-tion. By specializing $q=1$ one obtains a canonical basis for every irreducible finite-dimensional $G$-representation. We therefore obtain a canonical basis of the ring of regular function $H^0(\GN,\mathcal{O}_{\GN})$ which by Lemma \ref{facts} is parametrized by a graded version of $\mathbb{N}^{\Delta^+_{\ii}}$ as defined in Section \ref{section:gradlus}.

\subsection{Transition maps and geometric lifting}
Using the identification \eqref{posident} we associate to the cone $\lci$ of Lusztig's parametrization the torus
\begin{equation*}
\TTil=\mathbb{G}_m^{\Delta^+_{\ii}}=\mathbb{G}_m^{N}.
\end{equation*}
Following \cite[42.2.6]{L93} we introduce transition maps.
\begin{defi}\label{eq:ltrans} We specify $\trl_{\jj}^{\ii}:\TTil  \rightarrow \TTilp$ as follows. If $\jj\in \W$ is obtained from $\ii\in \W$ by a $3$-move at position $k$ we set $y=\trl_{\jj}^{\ii}$ with 
$$
y = \left(x_1, \dots, x_{k-2}, \frac{x_{k}x_{k+1}}{x_{k-1} + x_{k+1}}, x_{k-1} + x_{k+1}, \frac{x_{k-1}x_{k}}{x_{k-1} + x_{k+1}}, x_{k+2}, \dots, x_N \right).
$$
If $\jj\in \W$ is obtained from $\ii\in \W$ by a $2$-move at position $k$ we set
$$\trl_{\jj}^{\ii} \left(x_1, \dots, x_N \right) = \left( x_1, \dots, x_{k-1}, x_{k+1}, x_k, x_{k+2}, \dots, x_N\right).
$$
For arbitrary $\ii, \jj\in \W$ we define $\trl_{\jj}^{\ii}:\TTil \rightarrow \TTilp$ as the composition of the transition maps  corresponding to a sequence of $2-$ and $3-$moves transforming $\ii$ into $\jj$.
\end{defi}

Using Definition \ref{levelroot} and the identification \eqref{posident} we define for $a\in I$ and $\ii\in\W$ the positive regular map $\luslpotia$ on $\TTil$ by
\begin{equation*}
\luslpotia (x) = \displaystyle\sum_{r=0}^{m_a} x_{a,r}.
\end{equation*}
Recalling from Section \ref{sec:trop} that $\tro{\TTil}=\mathbb{Z}^{N}$ we obtain that Lusztig's parametrizations $\lci\subset \Z^N$ are cut out by $\tro{\luslpotia}$: 
\begin{lem}\label{lustigprep} For reduced words $\ii, \jj \in \W$ we have:
\begin{enumerate}
\item $\luslpotja = \luslpotia \circ \trl^{\jj}_{\ii}.$
\item $\lci=\{x \in \tro{\TTil} \mid \forall a \in I: \tro{\luslpotia} (x) \ge 0 \}.$
\end{enumerate}
\end{lem}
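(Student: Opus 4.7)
My plan is to treat (1) by reduction to single $2$- and $3$-moves, and then to obtain (2) as an essentially immediate consequence of the form of $\luslpotia$ as a subtraction-free sum of coordinates.

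For (1), an arbitrary transition map $\trl^{\ii}_{\jj}$ is, by Definition~\ref{eq:ltrans}, a composition of maps associated to single $2$- and $3$-moves, and a functional identity of the claimed form is preserved under such composition. So it will suffice to check the identity when $\jj$ is obtained from $\ii$ by a single move. A $2$-move at position $k$ acts as a transposition of the two coordinates $x_k, x_{k+1}$; since the commutation relation exchanges two letters $a',b' \in I$ with $c_{a',b'}=0$, the level sums are preserved on the nose. For a $3$-move at position $k$, set $a = i_{k-1} = i_{k+1}$ and $b = i_k$. For letters $c \notin \{a,b\}$ both positions and values are unchanged, so the only two nontrivial checks are for $c=a$ and $c=b$. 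The sum indexed by $a$ changes because the two occurrences of $a$ in $\ii$ at positions $k-1, k+1$ fuse into the single occurrence in $\jj$ at position $k$, and their combined contribution is $x_{k-1}+x_{k+1} = y_k$. The sum indexed by $b$ changes because the single $b$-contribution $x_k$ in $\ii$ splits into the two entries $y_{k-1}, y_{k+1}$ in $\jj$, and the required equality reduces to the algebraic identity
\[
\frac{x_k x_{k+1}}{x_{k-1}+x_{k+1}} + \frac{x_{k-1} x_k}{x_{k-1}+x_{k+1}} = x_k,
\]
which is a clearing of denominators.

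For (2), since $\luslpotia(x) = \sum_r x_{a,r}$ is subtraction-free, its tropicalization is simply $\tro{\luslpotia}(x) = \min_r x_{a,r}$. Therefore $\tro{\luslpotia}(x) \ge 0$ for every $a \in I$ if and only if $x_{a,r} \ge 0$ for every pair $(a,r)$. Because the index sets $\{(a,r)\}_r$ taken over $a\in I$ partition the positions $[N]$, this is in turn equivalent to $x \in \N^N = \lci$, as claimed.

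The only real obstacle I foresee is the bookkeeping of the level indices $\beta_{a,r}$ across a $3$-move: two consecutive occurrences of $a$ in $\ii$ coalesce into a single occurrence in $\jj$, while the unique occurrence of $b$ in $\ii$ splits into two in $\jj$. Once this re-indexing is tracked carefully, the whole argument reduces to the two elementary identities above plus the trivial transposition step for a $2$-move.
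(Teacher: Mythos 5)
Your proposal is correct and fills in exactly the computation the paper leaves implicit: the paper's proof consists of the remark that (1) is a straightforward check (precisely your reduction to single $2$- and $3$-moves and the identity $\tfrac{x_k x_{k+1}}{x_{k-1}+x_{k+1}}+\tfrac{x_{k-1}x_k}{x_{k-1}+x_{k+1}}=x_k$) and that (2) follows directly from the definition, since $\tro{\luslpotia}(x)=\min_r x_{a,r}$ and the level index sets partition $[N]$. So you have taken essentially the same route, just spelled out in detail.
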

\begin{proof} Statement (1) is a straightforward computation and Statement (2) follows directly from the definition.
\end{proof}
We emphasize that Lemma \ref{lustigprep} is simply a reformulation, adapted to our setup, of well-known facts about Lusztig's parametrizations obtained in \cite{L93}, \cite{BZ2}.

\subsection{Lusztig's graded parametrization}\label{section:gradlus}
In this section we provide a geometric lift of the defining inequalities of the graded cone of Lusztig's parametrization of the canonical basis of the ring of regular function $H^0(\GN,\mathcal{O}_{\GN})$. For this we introduce functions $\lusrpotia$ on
\begin{equation*}
\gTTil:= {\mathbb{G}^I_m} \times \TTil:
\end{equation*}
\begin{defi}\label{kappa} 
Using \eqref{astardef} we denote by $\{\lusrpotia\}_{a\in [n]}$ the positive rational functions on $\gTTil$ satisfying:
\begin{enumerate}
\item For $(\lambda, x)\in \gTTil$ one has $\lusrpot{\ii}{i_N}(\lambda,x)=\lambda_{i_N^*}{x_N}^{-1}$.
\item For $\ii, \jj\in \W$ one has 
$\lusrpotja=\lusrpotia \circ (\text{id} \times \trl^{\jj}_{\ii})$ with $\trl^{\jj}_{\ii}$ as in \eqref{eq:ltrans}.
\end{enumerate}
\end{defi}
We introduce the analogue of $\luslpotia$ for $\Lg$ as follows:
\begin{defi}\label{kappavee} 
For $\ii\in\W$ and $a\in I$ we set $\luslpotiald:=\luslpotia$ and define the positive rational functions $\{\lusrpotia\}_{a\in [n]}$ on $\gTTil$ by requiring:  
	\begin{enumerate}
		\item For $(\lambda, x)\in \gTTil$ one has $\lusrpotld{\ii}{i_N}(\lambda,x)=\prod_{b\in I}\lambda_{b^*}^{c_{i_N,b}}{x_N}^{-1}$.
		\item For $\ii, \jj\in \W$ one has 
		$\lusrpotjald=\lusrpotiald \circ (\text{id} \times \trl^{\jj}_{\ii})$ with $\trl^{\jj}_{\ii}$ as in \eqref{eq:ltrans}.
	\end{enumerate}
\end{defi}
\begin{rem} In Corollary \ref{kappareg} we show that $\lusrpotia$ and $\lusrpotiald$ are regular.
\end{rem}

The functions $\tro{\luslpotia}$ cut out \emph{Lusztig's graded parametrization}
\begin{equation}\label{eq:wLusz}
\glci=\left\{(\lambda,x) \in \tro{\gTTil} \,\middle\mid\,  \forall a \in -[n] \cup [n] \, : \, \tro{\luslpotia} (\lambda,x) \geq 0 \right\}.
\end{equation}
Similarly, we define Lusztig's graded parametrization associated to $\Lg$ as
\begin{equation*}
\glcild:=\left\{(\lambda,x)\in [\gr\TTil]_{trop} \,\middle\mid\,  \forall a \in -[n] \cup [n] \, : \, \tro{\luslpotiald} (\lambda,x) \geq 0 \right\}.
\end{equation*}
We have:
\begin{prop}\label{facts} \begin{enumerate}
 \item For $\ii,\jj \in \W$ one has 
$$\glcj=\tro{\text{id}\times\trl^{\ii}_{\jj}} \glci.$$
 \item For $\lambda\in \mathbb{N}^I$ and $\ii\in \W$ the set
 $$\left\{x\in \tro{\TTil} \mid \left(\lambda,x\right) \in \glci\right\}$$ 
 parametrizes Lusztig's canonical basis of the irreducible representation $V(\sum_{a\in I}\lambda_a\omega_a)$.
 \end{enumerate}
\end{prop}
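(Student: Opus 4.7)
The plan is to handle the two parts separately. Part (1) is a formal consequence of the recursive construction of $\luslpot{\ii}{a}$ and $\lusrpot{\ii}{a}$ together with the compatibility of tropicalization with composition of positive rational maps. Part (2) then reduces, via part (1), to verifying the statement for a single convenient reduced word.

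For part (1), by Lemma \ref{lustigprep}(1) one has $\luslpot{\jj}{a} = \luslpot{\ii}{a} \circ \trl^{\jj}_{\ii}$, and by Definition \ref{kappa}(2) one has $\lusrpot{\jj}{a} = \lusrpot{\ii}{a} \circ (\text{id} \times \trl^{\jj}_{\ii})$. Since tropicalization is contravariantly functorial on positive rational maps, the $2n$ inequalities defining $\glcj$ pull back to those defining $\glci$ under $\tro{\text{id} \times \trl^{\jj}_{\ii}}$. As this piecewise-linear map is inverse to $\tro{\text{id} \times \trl^{\ii}_{\jj}}$, one concludes $\glcj = \tro{\text{id} \times \trl^{\ii}_{\jj}}(\glci)$.

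For part (2) I fix any $\ii \in \W$. Lemma \ref{lustigprep}(2) already shows that the inequalities $\tro{\luslpot{\ii}{a}} \geq 0$ cut out $x \in \mathbb{N}^N = \lci$, so only the $n$ inequalities $\tro{\lusrpot{\ii}{a}} \geq 0$ remain to be identified with the classical upper bounds that govern when the PBW monomial $F^x v_{\mu}$ represents a canonical basis vector of $V(\mu)$, where $\mu = \sum_a \lambda_a \omega_a$ and $v_\mu$ is a highest-weight vector. I would start with the last coordinate: since $\beta_N = s_{i_1}\cdots s_{i_{N-1}}(\alpha_{i_N}) = \w0 s_{i_N}(\alpha_{i_N}) = \alpha_{i_N^*}$, the classical bound on $x_N$ is $x_N \leq \mu(h_{i_N^*}) = \lambda_{i_N^*}$, which is precisely the tropicalization of the defining formula $\lusrpot{\ii}{i_N}(\lambda,x) = \lambda_{i_N^*} x_N^{-1}$ from Definition \ref{kappa}(1). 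The remaining inequalities $\tro{\lusrpot{\ii}{a}} \geq 0$ for $a \neq i_N$ are, by Definition \ref{kappa}(2), determined recursively to be compatible with the transition maps. Since the classical bounds enjoy the same compatibility under $2$- and $3$-moves (Lusztig's theorem on the behavior of PBW bases under braid moves), part (1) propagates the match from the last-coordinate base case to the full cone.

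The main obstacle I foresee is the classical input at the base case: one must verify that, after positivity, the sole new constraint on $x\in\mathbb{N}^N$ for $F^x v_\mu$ to realize a canonical basis element of $V(\mu)$ is the inequality $x_N \leq \lambda_{i_N^*}$, the rest of the defining inequalities arising by propagation under braid moves. This content is contained in \cite{L93, BZ2}, but must be translated carefully between the PBW/crystal conventions used there and the positive-geometric formulation adopted here, paying particular attention to the placement of the involution $a\mapsto a^*$ and to the identification between Lusztig data and multiplicities in the PBW monomial.
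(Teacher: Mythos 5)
Your part (1) is correct and is exactly the paper's argument, and the skeleton of your part (2) --- verify one inequality at the last coordinate and propagate to all $a\in I$ via Definition \ref{kappa}(2) and changes of reduced word --- is also the paper's. The genuine gap is the base case, which you yourself flag as the main obstacle but do not close, and the classical input you propose to use there is not the right one. Reading your criterion as non-vanishing of the PBW monomial, i.e. $F^x v_\mu\neq 0$, it is simply false that this characterizes the parametrizing set: already for $\mathfrak{g}=\mathfrak{sl}_3$, $\ii=(1,2,1)$ and $\mu=\omega_1+\omega_2$ (adjoint representation) both $F^{(1,1,0)}v_\mu$ and $F^{(2,0,1)}v_\mu$ are nonzero and lie in the one-dimensional weight space of weight $-\alpha_1$, so the set $\{x\mid F^xv_\mu\neq 0\}$ is strictly larger than $\dim V(\mu)$ and cannot index a basis. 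The survival in $V(\mu)$ of the canonical basis element $b(x)$ with $\ii$-Lusztig datum $x$ is not detected by non-vanishing of the corresponding PBW monomial, so your observation that $\beta_N=\alpha_{i_N^*}$ forces $x_N\le\lambda_{i_N^*}$ gives at best a necessary condition and does not identify the cut-out set.

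What the paper actually uses, and what is missing from your sketch, is the crystal-theoretic characterization: by \cite[Proposition 8.2]{Ka} the canonical basis of $V(\sum_a\lambda_a\omega_a)$ is parametrized by $\{x\in\lci\mid \varepsilon^*_a(x)\le\lambda_a \text{ for all }a\}$, so part (2) reduces to the equivalence $\varepsilon^*_{a^*}(x)\le\lambda_{a^*}\Leftrightarrow\trod{\lusrpotia}(\lambda,x)\ge 0$, whose base case is the identity $\varepsilon^*_{i_N^*}(x)=x_N$ on Lusztig data. That identity is not a formal consequence of $\beta_N=\alpha_{i_N^*}$; the paper derives it from the explicit description of the Kashiwara involution on Lusztig data, $x^*=\trod{\trl_{\ii}^{\ii^{\starop}}}(x_N,\ldots,x_1)$ with $\ii^{\starop}=(i_N^*,\ldots,i_1^*)$ (\cite[Proposition 3.3 (iii)]{BZ2}), combined with \cite[Proposition 6.1 (i)]{BZ2}. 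Once this is in place, your propagation step is fine (for each $a$ choose a reduced word ending in the appropriate letter, use Definition \ref{kappa}(2), the invertibility of the tropicalized transition maps, and the fact that these maps re-index the same canonical basis elements, under which $\varepsilon^*$ is invariant), and it coincides with the paper's. But without the $\varepsilon^*$-characterization and the involution computation the base case is unproven, and with the PBW-non-vanishing reading it is wrong.
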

{Before giving the proof, lets us recall that by \cite{Lu} (see also \cite[Proposition 3.6. (i)]{BZ2}) $\lci$ has a crystal structure isomorphic to $B(\infty)$ in the sense of \cite{Ka}. We denote the Kashiwara involution defined in \cite{Ka} by $*$ and define 
			$$\varepsilon^*_{a}(x)=\displaystyle\max_{k\in \mathbb{N}}\{\tilde e_a ^kx^*\in \lci\},
		$$
		where $\tilde e_a$ is the Kashiwara crystal operator corresponding to the simple root $\alpha_a$.
\begin{proof}[Proof of Proposition \ref{facts}]
			Statement (1) is a direct consequence of Lemma \ref{lustigprep}, the definition of the cone of Lusztig's graded parametrization given in \eqref{eq:wLusz} and Definition \ref{kappa}.
			
			For statement (2) note that by \cite[Proposition 8.2]{Ka}, Lusztig's canonical basis of the irreducible representation $V(\sum_{a\in I}\lambda_a\omega_a)$ is para\-metrized by $\{x\in \lci \mid \forall a \in I \,:\, \varepsilon^*_a(x)\le \lambda_a\}$. It thus suffices to show that for $a\in I$
	\begin{equation}\label{grlusparats}
	\varepsilon^*_{a^*}(x)\le \lambda_{a^*} \Leftrightarrow 
\trod{\lusrpotia} (x)\ge 0.
\end{equation}
		For $\ii=(i_1,\ldots,i_N)\in \W$, we define $\ii^{\starop}:=(i^*_N, \ldots, i^*_1)$. By \cite[Proposition 3.3 (iii)]{BZ2} (see also \cite{Lu}) we have for $x=(x_1, \ldots,x_N)\in \lci$ that \begin{equation}\label{Kashinv}x^*=\trod{\trl_{\ii}^{\ii^{\starop}}} \left(x_N,\ldots,x_1 \right).
	\end{equation} 
	By \eqref{Kashinv} and \cite[Proposition 6.1 (i)]{BZ2} we obtain
	\begin{equation}\label{epsex}
	\varepsilon^*_{i_N^*}(x)=\varepsilon_{i_N^*}\left(x^*\right)=\varepsilon_{i_N^*}\left( \trod{ \trl^{\ii}_{\ii^{\starop}}} x^*\right)=x_N.
	\end{equation}
	From \eqref{epsex} and Definition \ref{kappa} we deduce \eqref{grlusparats}.
\end{proof}
\begin{rem} 
In general there is no closed explicit description of $\tro{\lusrpotia}$.
For arbitrary reduced words in type $A$ the explicit form of $\tro{\lusrpotia}$  is obtained in \cite[Theorem 2.16]{GKS} by combinatorial means.	
For two special classes of reduced words it is obtained in \cite{Rei} and in \cite{SST}: In \cite[Proposition 7.4]{Rei} reduced words adapted to the Dynkin quiver $Q=(I, A)$ of $\mathfrak{g}$ satisfying a certain homological condition are treated. By \cite[Corollary 3.23]{Schu} these are precisely the reduced words $\ii$ adapted to $Q$ such that $\omega_a$ spans a minuscule $\mathfrak{g}$-representation for every sink $a\in I$. For a different class of reduced words satisfying a combinatorial condition called "simply-braided for $a\in I$"  (see \cite[Definition 4.1]{SST}), the function $\tro{\lusrpotia}$ can explicitly be obtained from the "bracketing rules" in \cite[Theorem 4.5]{SST}.
\end{rem}}

\section{string parametrization}
\subsection{String parametrization of the canonical basis}
Let $B(\infty)$ be the crystal of $U_q^-$ in the sense of \cite{Ka}. Recall that the \emph{string parametrization} of the canonical basis corresponding to the reduced word $\mathbf{i}=(i_1,i_2,\ldots,i_N)\in\W$ is given by the set of $\ii$-string data of the elements in $B(\infty)$. Here the \emph{$\ii$-string datum} $\stp_{\ii}(b)\in \mathbb{N}^{N}$ of $b\in B(\infty)$ is determined inductively by
\begin{align*}
x_1 &= \displaystyle\max \{ k \in \N \mid \tilde{e}_{i_1}^k b \in B(\infty)\}, \\
x_2 &= \displaystyle\max \{k \in \N \mid \tilde{e}_{i_2}^{k}\tilde{e}_{i_1}^{x_1} b \in B(\infty)\}, \\
& \vdots \\
x_N &= \displaystyle\max \{k\in \N \mid \tilde{e}_{i_N}^k \tilde{e}_{i_{N-1}}^{x_{N-1}}\cdots \tilde{e}_{i_1}^{x_1} b \in B(\infty) \}.
\end{align*}
Following \cite{BZ2,Lit} we call 
$$\sci:=\{ \stp_{\mathbf i} (b) \mid b\in  B(\infty) \}\subset \N^{N}$$ 
the \emph{string cone associated to $\mathbf i$}.

\subsection{Transition maps and geometric lifting}
Using the identification \eqref{posident} we associate to the string cone $\sci$ the torus
\begin{equation*}
\TTis=\mathbb{G}_m^{\Delta^+_{\ii}}=\mathbb{G}_m^N.
\end{equation*}
Following \cite{BZ2} we further introduce positive rational functions $\trs^{\ii}_{\jj} : \TTis \dashedrightarrow \TTjs$ such that the tropicalization $\tro{\trs^{\ii}_{\jj}}$ gives the transition map between the string cones associated to reduced words $\ii, \jj\in \W$:
\begin{defi} We specify $\trs_{\jj}^{\ii}:\TTis  \rightarrow \TTjs$ as follows. If $\jj\in \W$ is obtained from $\ii\in \W$ by a $3$-move at position $k$ we set $y=\trl_{\jj}^{\ii}$ with 
	$$
	y = \left(x_1, \dots, x_{k-2}, \frac{x_{k}x_{k+1}}{x_{k-1}x_{k+1}+x_k}, x_{k-1}x_{k+1}, \frac{x_{k+1}x_{k-1}+x_k}{x_{k+1}}, x_{k+2}, \dots, x_N \right).
	$$
	If $\jj\in \W$ is obtained from $\ii\in \W$ by a $2$-move at position $k$ we set
	$$\trs_{\jj}^{\ii} \left(x_1, \dots, x_N \right) = \left( x_1, \dots, x_{k-1}, x_{k+1}, x_k, x_{k+2}, \dots, x_N\right).
	$$
	For arbitrary $\ii, \jj\in \W$ we define $\trs_{\jj}^{\ii}:\TTis \rightarrow \TTjs$ as the composition of the transition maps corresponding to a sequence of $2-$ and $3-$moves transforming $\ii$ into $\jj$.
\end{defi}
Recall that $[\TTis]_{trop}=\mathbb{Z}^{N}$. By \cite{BZ2, Lit}, we have on $B(\infty)$
\begin{equation*}
\stp_{\jj}=\tro{\trs_{\jj}^{\ii}}\circ\stp_{\ii}.
\end{equation*}

In the remainder of this subsection we introduce certain positive functions $\stlpotia$ on $\TTis$ and show that the string cone $\sci\subset \tro{\TTis}=\Z^N$  is cut out by the functions $\tro{\stlpotia}$. 
\begin{defi}
We denote by $\{\stlpotia\}_{a\in I}$ the positive rational functions on $\TTis$ satisfying:
\begin{enumerate}
	\item For $(\lambda, x)\in \TTis$ one has $\stlpot{\ii}{i_N}(\lambda,x)={x_N}$.
	\item For $\ii, \jj\in \W$ one has 
	$\stlpotja=\stlpotia \circ \trs^{\jj}_{\ii}$ with $\trs^{\jj}_{\ii}$ as in \eqref{eq:ltrans}.
\end{enumerate}	
\end{defi}
\begin{rem} We show in Corollary \ref{stringreg} that $\stlpotia$ is regular.
\end{rem}
\begin{rem} By Theorem \ref{lempot} and Theorem \ref{string2} the function $\stlpotia$ is closely related to the function $\lusrpotia$ given in Definition \ref{kappa}.
\end{rem}
\begin{prop}\label{stringpos} For $\ii \in \W$ we have
	\begin{equation}\label{eq:strinpos}
	\sci=\left\{x \in [\TTis]_{trop} \,\middle\mid\, \tro{\stlpotia}(x)\ge 0 \text{ for all }a\in I,\, \ii\in \W\right\}.
	\end{equation}
\end{prop}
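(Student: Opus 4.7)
My approach to Proposition \ref{stringpos} is to reduce the claim to a verification for a single reference reduced word by exploiting the compatibility of both sides with the transition maps, and then to match the resulting tropical inequalities with a known facet description of the string cone.

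First, condition (2) in the definition of $\stlpotia$ tropicalizes to $\tro{\stlpot{\jj}{a}}=\tro{\stlpot{\ii}{a}}\circ\tro{\trs^{\jj}_{\ii}}$, while the Berenstein-Zelevinsky/Littelmann relation recalled above the proposition reads $\stp_{\jj}=\tro{\trs^{\ii}_{\jj}}\circ\stp_{\ii}$ on $B(\infty)$. Together these show that the right-hand side of the claimed equality is carried under the tropical transition map exactly as $\sci$ is carried onto $\sci_{\jj}$, so it suffices to fix a single reference reduced word $\ii\in\W$.

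For the forward inclusion, let $x=\stp_{\ii}(b)$ for some $b\in B(\infty)$ and $a\in I$. Choose $\jj\in\W$ with $j_N=a$, which exists because every simple reflection appears in every reduced word of $\w0$ and braid/commutation moves allow a chosen occurrence to be brought to position $N$. Then
$$\tro{\stlpot{\ii}{a}}(x)=\tro{\stlpot{\jj}{a}}\bigl(\stp_{\jj}(b)\bigr)=[\stp_{\jj}(b)]_N,$$
which is non-negative as it is the maximal integer $k$ for which an iterated Kashiwara operator is defined on $B(\infty)$.

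For the reverse inclusion, I would specialise to the reference $\ii$ and expand each tropical minimum implicit in $\tro{\stlpot{\ii}{a}}$ into its constituent linear inequalities, matching them with the explicit facet description of $\sci$ due to Littelmann and Berenstein-Zelevinsky. A small example is reassuring: for $\mathfrak{sl}_3$ with $\ii=(1,2,1)$, direct computation of the $3$-move transition yields $\tro{\stlpot{\ii}{2}}(x)=\min(x_1+x_3,\,x_2)-x_3$, which together with $\tro{\stlpot{\ii}{1}}(x)=x_3$ unfolds into precisely the three facet inequalities of the string cone. I expect the main obstacle to be carrying out this combinatorial match in general; a natural route is induction on $N$, peeling off the last simple reflection so that the inequality at position $N$ directly encodes $\varepsilon_{i_N}\ge 0$, and handling the remaining positions via the recursive structure of the Littelmann/BZ description of $\sci$.
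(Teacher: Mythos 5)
Your forward inclusion is fine: using $\stlpot{\jj}{j_N}(x)=x_N$, the compatibility $\stlpotja=\stlpotia\circ\trs^{\jj}_{\ii}$, functoriality of tropicalization for positive maps, and $\stp_{\jj}=\tro{\trs^{\ii}_{\jj}}\circ\stp_{\ii}$, every string datum satisfies all the inequalities; this is exactly the inclusion $\sci\subset\mathcal{P}_{\ii}$ that the paper also treats as immediate. The genuine gap is the reverse inclusion, which is the substance of the proposition and which you only sketch. Your plan is to expand the tropical minima into linear inequalities and match them with ``the explicit facet description of $\sci$ due to Littelmann and Berenstein--Zelevinsky,'' but no such description exists in the generality needed: Littelmann's inequalities cover only a special class of reduced words, Gleizer--Postnikov only type $A$, and Berenstein--Zelevinsky's description via $\ii$-trails has a completely different shape, so comparing it with the recursively defined functions $\tro{\stlpotia}$ is itself a nontrivial combinatorial problem (in type $A$ this comparison is the content of a separate paper of the authors, \cite{GKS}). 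The paper's remark immediately after the proposition makes precisely this point. Likewise the proposed ``induction on $N$ peeling off the last reflection'' is not worked out and runs into the same problem of controlling the functions $\tro{\stlpotia}$ in every chart simultaneously; the $\mathfrak{sl}_3$ example confirms the statement but gives no mechanism for the general case. Also note that reducing to a single reference word buys nothing here, since the verification is equally inexplicit for every word.

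The paper's proof is of a different nature and avoids facet descriptions entirely: it uses the free crystal structure on $\mathbb{Z}^N=\tro{\TTis}$ with operators $\varepsilon_a^*$ and $f_a^*$ (Kashiwara, Nakashima--Zelevinsky), which commute suitably with $\tro{\trs^{\ii}_{\jj}}$. One checks that the cone $\mathcal{P}_{\ii}$ cut out by the inequalities satisfies $\varepsilon_a^*\geq 0$ and is stable under $(f_a^*)^{-1}$ whenever $\varepsilon_a^*>0$, while $f_a^*\sci\subset\sci$; a counterexample $x\in\mathcal{P}_{\ii}\setminus\sci$ minimizing $\sum_a\varepsilon_a^*(x)$ is then forced to have $\varepsilon_a^*(x)=0$ for all $a$, and a final argument propagating a nonzero coordinate to the right through $2$- and $3$-moves shows such an $x$ must be $0\in\sci$, a contradiction. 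Some argument of this kind (or an actual completion of your combinatorial matching) is required; as written, the hard inclusion is missing.
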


\begin{rem} In general there is no closed explicit description of the function $\tro{\stlpotia}$. Explicit inequalities for the string cone $\sci$ are obtained in \cite{Lit} for a special class of reduced words and in \cite{GP} for all reduced words in type $A$ (also in \cite{BZ2} for arbitrary reduced words but in a less explicit form). In \cite{GKS} we show that the functions $\tro{\stlpotia}$ recovers the string cone inequalities from \cite{GP}.
\end{rem}
Before proving Proposition \ref{stringpos} we recall from \cite{Ka2, NZ} that $\tro{\TTis}$ has the structure of a free crystal in the sense of \cite{DKK} given as follows. For $x=(x_1, \dots, x_N)\in \Z^N=\tro{\TTis}$ and $a\in I$ we set 
\begin{align} \notag
\nu^k (x)&:= x_k +\sum_{\ell=k+1}^N c_{k, \ell} x_\ell,\\ \label{eq:starstring}
\varepsilon_a^* (x) &:= \max\{ \nu^k (x) \mid k\in [N],\, i_k=a\},\\ \notag
f_a^* (x) &:= (x_{\ell} + \delta_{\ell, k(x)})_{\ell\in [N]}.
\end{align}
where $k(x)\in[N]$ is the smallest $k$ with $i_k=a$ and $\nu^{k}(x)=\varepsilon_a^*(x)$. The maps $f_a^*$ and $\varepsilon_a^*$ satisfy
\begin{align}\label{freec1}
\varepsilon_a^* &= \varepsilon_a^* \circ \tro{\trs^{\ii}_{\jj}},\\\label{freec2}
\tro{\trs^{\ii}_{\jj}}  \circ f_a^* &= f_a^* \circ \tro{\trs^{\ii}_{\jj}}.
\end{align}
By \cite{NZ} we have for $a\in I$
\begin{equation}\label{stringstab}
f_a^* \sci \subset \sci.
\end{equation}
\begin{proof}[Proof of Proposition \ref{stringpos}]
	Denoting the set on the right hand side of \eqref{eq:strinpos} by $\mathcal{P}_{\ii}$ we have have $\sci\subset \mathcal{P}_{\ii}$. Furthermore, for $\jj\in\W$ and $x\in\mathcal{P}_{\jj}$ one computes $\varepsilon_{j_N}^* (x)\geq 0$ and
	$$
	\varepsilon^*_{j_N}(x) >0 \Rightarrow (f^*_{j_N})^{-1} (x) \in \mathcal{P}_{\jj}.
	$$
	Thus, by \eqref{freec1} and \eqref{freec2} for $a\in I$ 
	\begin{align}\label{posstab1}
	\varepsilon_a^* (\mathcal{P}_{\ii})&\geq 0,\\\label{posstab2}
	(f_a^*)^{-1} \{x\in\mathcal{P}_{\ii} \mid \varepsilon_a^* (x) >0 \} &\subset \mathcal{P}_{\ii}. \end{align}
	Let $x\in\mathcal{P}_{\ii} \backslash \sci$ minimize $\sum_{a\in I} \varepsilon_a^*(x)$ on $\mathcal{P}_{\ii}\backslash \sci$. We show
	\begin{equation}
	\label{highweight}
	\forall a \in I \,:\, \varepsilon_a^* (x)=0
	\end{equation}
	as follows.
	By \eqref{posstab1} we have for $a\in I$ that $\varepsilon_a^* (x)\geq 0$.
	If $\varepsilon_a^*(x)>0$ then by \eqref{posstab2} we obtain $y:=(f_a^*)^{-1} (x) \in \mathcal{P}_{\ii}$. Since $\varepsilon_a^* (y) = \varepsilon_a^*(x) -1$ we conclude from the minimality assumption on $x$ that $y\in\sci$. Using \eqref{stringstab} we obtain the contradiction $x=f_a^* (y)\in\sci$. Thus \eqref{highweight} holds.
	
	We conclude the proof by deducing $x=0$ from \eqref{highweight} as follows. Assuming $x\neq 0$ we choose $k\in[N]$ and $\jj\in\W$ with	 $(\tro{\trs^{\ii}_{\jj}} (x))_k \neq 0$ and 
	\begin{equation}\label{positivity}
	\forall \jj'\in\W, k'>k \, : \,  (\tro{\trs^{\ii}_{\jj}} (x))_{k'} = 0.
	\end{equation}
	By \eqref{highweight} we have $\nu^{k} (\tro{\trs^{\ii}_{\jj}} (x)) \leq 0$. Thus, by \eqref{positivity} 
	\begin{equation}\label{kcorneg}
	(\tro{\trs^{\ii}_{\jj}} (x))_k = \nu^{k} (\tro{\trs^{\ii}_{\jj}} (x)) < 0.
	\end{equation}
	From $x\in\mathcal{P}_{\ii}$ we conclude $k<N$. Thus, there exists $\jj', \jj'' \in\W$ with 
	\begin{equation}\label{kcorsame}(\tro{\trs^{\ii}_{\jj'}} (x))_{k} = (\tro{\trs^{\ii}_{\jj}} (x))_{k}
	\end{equation}
	such that $\jj''$ is obtained from $\jj'$ either by a $2$-move at position $\ell'=k$ or by a $3$-move at position $\ell'\in\{k, k+1\}$. By \eqref{positivity}, \eqref{kcorneg} and \eqref{kcorsame} we have $(\tro{\trs^{\ii}_{\jj''}} (x))_{\ell'+1}<0$, which contradicts \eqref{positivity}.
\end{proof}
\begin{rem} In the special case that $\mathfrak{g}$ is of type $A$, a proof of the equality $\mathcal{S}_{\ii}=\mathcal{P}_{\ii}$ was obtained in \cite{GP} using explicit defining inequalities for $\sci$ derived in op. cit.
\end{rem}

\subsection{Graded string cones}
Following \cite{Lit}, we define the \emph{graded string cone} 
\begin{equation*}
\gsci:=\left\{(\lambda,x)\in \mathbb{Z}^I\times \mathcal{S}_{\ii} \mid \lambda_{a}  \ge \displaystyle\max_{i_k=a} \left\{x_k + \displaystyle\sum_{\ell=k+1}^{N}c_{i_{\ell},i_{k}}x_{\ell}\right\} \text{ for all $a\in I$}\right\},
\end{equation*}
which parametrizes a basis of $H^0(\GN,\mathcal{O}_{\GN})$ by \cite[Proposition 1.5]{Lit}. 
\begin{rem}
By \cite{Ka,NZ} the string cone $\sci$ has a crystal structure isomorphic to $B(\infty)$ with $\varepsilon^*_a(x)$ given by \eqref{eq:starstring}.
Furthermore, by \cite[Proposition 8.2]{Ka} Lusztig's canonical basis of the irreducible representation $V(\sum_{a\in I}\lambda_a\omega_a)$ is para\-metrized by the set of $x\in \mathcal{S}_{\ii}$ such that $\varepsilon^*_a(x)\le \lambda_a$ for all $a\in I.$ This gives an alternative proof that $\gsci$ parametrizes Lusztig's canonical basis of $H^0(\GN,\mathcal{O}_{\GN})$.
\end{rem}

In the following we introduce positive functions on 
\begin{equation*}
\gTTis:= {\mathbb{G}^I_m} \times \TTis,
\end{equation*}
whose tropicalization cut out $\sci$:
\begin{defi}\label{nu} For $a\in I$ we specify on $\gTTis$ the positive function 
	$$\strpotia (\lambda,x)=\lambda_a\displaystyle\sum_{\substack{k\in [N] \\ i_k=a}}x^{-1}_k\displaystyle\prod_{\ell=k+1}^N x^{-c_{i_{\ell},i_k}}_{\ell}.$$
\end{defi}
\begin{prop}\label{wstringpos} For reduced words $\ii, \jj \in \W$ we have
	\begin{enumerate}
		\item $\strpotja = \strpotia \circ \trs^{\ii}_{\jj},$
\item $\gsci=\left\{(\lambda,x) \in \tro{\gr\TTis} \,\middle\mid\,  \forall a \in -[n] \cup [n] \, : \, \tro{\stlpotia} (\lambda,x) \geq 0 \right\}.$
	\end{enumerate}
\end{prop}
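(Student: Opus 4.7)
My plan is to prove both parts by reduction to local computations. For part (1), since every string transition factors as a composition of elementary transitions corresponding to $2$- and $3$-moves, it suffices to verify the identity when $\jj$ is obtained from $\ii$ by a single such move. For a $2$-move at position $k$, which requires $c_{i_k, i_{k+1}} = 0$, each summand of $\strpot{\ii}{a}$ either lies outside positions $k, k+1$ and transfers without change, or involves these positions and is matched after swapping the roles of $x_k$ and $x_{k+1}$, using the vanishing of $c_{i_k, i_{k+1}}$ to kill the relevant exponents. For a $3$-move at position $k$, I would substitute the explicit rational formula for the transition map from the definition and verify the resulting identity after clearing denominators; the analysis splits naturally into cases depending on whether $a$ agrees with $i_k = j_{k-1} = j_{k+1}$, with $i_{k-1} = i_{k+1} = j_k$, or with neither.

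For part (2), I split the defining conditions of $\gsci$ into two groups. The inequalities indexed by $a \in [n]$ coincide with the cone conditions $x \in \sci$; these follow from Proposition \ref{stringpos} combined with the observation that each $\stlpot{\ii}{a}$ depends only on $x$ and not on $\lambda$. The grading inequalities, indexed by $a \in -[n]$ with the convention $\stlpot{\ii}{-a} = \strpot{\ii}{a}$, are handled by direct tropicalization of the explicit formula in Definition \ref{nu}:
\[
\tro{\strpot{\ii}{a}}(\lambda, x) = \lambda_a - \max_{k \colon i_k = a} \Big( x_k + \sum_{\ell = k+1}^N c_{i_\ell, i_k} x_\ell \Big),
\]
using that the tropical semiring turns products into sums and positive sums into minima, together with $\min(-S) = -\max S$. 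The condition $\tro{\strpot{\ii}{a}}(\lambda, x) \geq 0$ therefore coincides exactly with the grading inequality defining $\gsci$.

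The main obstacle is the $3$-move verification in part (1): unlike Lusztig's polynomial transitions, string transitions are genuinely rational, so the identity requires careful denominator tracking. I plan to reduce first to the $A_2$ subpattern localized at positions $k-1, k, k+1$, where only the variables $x_{k-1}, x_k, x_{k+1}$ interact nontrivially and the Cartan entries $c_{j_{k-1}, j_k} = c_{j_{k+1}, j_k} = -1$ enter explicitly. The factors indexed by $\ell > k+1$ appear symmetrically on both sides of the proposed identity (the reindexing is absorbed by the symmetry $c_{a, b} = c_{b, a}$ in the simply-laced case) and cancel, reducing the check to a finite collection of monomial identities in three variables that can be dispatched by direct computation.
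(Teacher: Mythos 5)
Your proposal is correct and takes essentially the same route as the paper's (very terse) proof: part (1) is exactly the ``straightforward computation'' reduced to $2$- and $3$-moves (with the tail factors $\ell>k+1$ matching because $y_{k-1}y_{k+1}=x_k$ and $y_k=x_{k-1}x_{k+1}$), and part (2) is deduced from Proposition \ref{stringpos} together with the evident tropicalization of Definition \ref{nu}, which is all the paper invokes. One minor imprecision: in the $3$-move case the two relevant summands combine into a single one through a genuine rational cancellation rather than ``monomial identities'', but since you clear denominators anyway this does not affect the argument.
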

\begin{proof}
	Statement (1) is a straightforward computation. 
	Statement (2) follows from Proposition \ref{stringpos}.
\end{proof}
We introduce the analogues of $\stlpotia$ and $\gsci$ for $^{L}\mathfrak{g}$ as follows.
\begin{defi}\label{nulld}
	For $a\in I$ we specify on $\gTTis$ the positive functions
	$\stlpotiald:=\stlpotia$ and 
	$$\strpotiald (\lambda,x)=\left(\prod_{b\in I}\lambda^{c_{a,b}}_b\right) \displaystyle\sum_{\substack{k\in [N]\\ i_k=a}}x^{-1}_k\displaystyle\prod_{\ell=k+1}^N x^{-c_{i_{\ell},i_k}}_{\ell}.$$
\end{defi}
\begin{defi} We introduce the graded string cone of $^{L}\mathfrak{g}$ as
	$$\gscild:=\left\{(\lambda,x) \in \tro{\gr\TTis} \,\middle\mid\,  \forall a \in -[n] \cup [n] \, : \, \tro{\stlpotiald} (\lambda,x) \geq 0 \right\}.$$
\end{defi}

\section{The cluster spaces of the base affine space}
\subsection{Generalized minors and the open double Bruhat cell}\label{minors}
In the following we identify the weight lattice of $\mathfrak{g}$ with the group of multiplicative characters on $T$. For a dominant weight $\lambda: T \rightarrow \mathbb{G}_m$, we define the \emph{principal minor} $\Delta_{\lambda}:G \rightarrow \mathbb{A}^1$ to be the function defined on the open subset $\mathcal{N}^-T\mathcal{N} \subset G$ by
$$\Delta_{\lambda}(u^-tu^+):= \lambda(t) \quad u^-\in \mathcal{N}^-, t\in T, u^+\in \mathcal{N}.$$
Let $\gamma,\delta$ be extremal weights such that $\gamma=w_1\lambda$, $\delta=w_2\lambda$ for some $w_1,w_2\in W$, $\lambda\in P^+$. Recall the embedding of sets \eqref{Wemb} of $W$ into $\text{Norm}_G(T).$
The \emph{generalized minor} associated to $\gamma$ and $\delta$ is 
$$\Delta_{\gamma,\delta}(g):=\Delta_{\lambda}(\overline{w}_1^{-1}g \overline{w}_2), \quad g\in G.$$

The base affine space $\GN$ is the partial compactification of the \emph{open double Bruhat cell} 
$$G^{\w0,e} := B\w0 B \cap B_-$$
obtained by allowing the generalized minors $\Delta_{\omega_a,\omega_a}$ and $\Delta_{\w0\omega_a,\omega_a}$ to vanish (see \cite[Section 2.6]{BFZ2}).

\subsection{The cluster ensemble of the open double Bruhat cell}

\subsubsection{Cluster seeds}
Following  Fomin-Zelevinsky, Berenstein-Fomin-Zelevinsky and Fock-Goncharov \cite{FZ, BFZ2,FZ2, FG} we recall the definition of the cluster ensemble associated to $G^{\w0,e}$. We start by introducing the notion of a seed.
\begin{defi} A seed is a datum  $\SE=(\Lambda, \langle \cdot, \cdot \rangle, \{e_k\}_{k\in M}, M_0)$, where
		\begin{itemize}
		\item[(i)] $\Lambda$ is a lattice,
		\item[(ii)] $\langle \cdot, \cdot \rangle$ is a skew-symmetric $\mathbb{Z}$-valued bilinear form on $\Lambda$,
   	\item[(iii)] $M_0\subset M$ are finite set,
		\item[(iv)] $\{{e_k}\}_{k\in M}$ is a basis of $\Lambda$.
	\end{itemize}
We associate to a seed $\SE=(\Lambda, \langle \cdot, \cdot \rangle, \{e_k\}_{k\in M}, M_0)$ a quiver $\Gamma_{\SE}$  as follows.
	The vertices $\{v_k\}_{k\in M}$ of $\Gamma_{\SE}$ are indexed by $M$. If $\left<e_k,e_{\ell}\right> >0$ then there are $\left<e_k,e_{\ell}\right>$ arrows with source $v_k$ and target $v_{\ell}$ in $\Gamma_{\SE}$. A vertex $v_k$ is called \emph{frozen} if $k\in M_0$ and \emph{mutable} if $k\in M\setminus M_0$.
\end{defi}

Let $\SE=(\Lambda, \langle \cdot, \cdot \rangle, \{e_k\}_{k\in M}, M_0)$ be a seed. For each $k\in M \setminus M_0$ we define the seed $\mu_k(\Gamma_{\Sigma})=(\Lambda, \langle \cdot, \cdot \rangle, \{e'_k\}_{k\in M}, M_0)$, called the \emph{mutation of $\Sigma$ at $k$}, by setting
\begin{equation*}
e'_j=\begin{cases} e_j+\max\{0, \left<e_j,e_k\right>\}e_k & \text{ if }j\ne k \\
-e_k & \text{ if }j=k.
\end{cases}
\end{equation*}
The quiver $\mu_k(\Gamma_{\Sigma}):=\Gamma_{\mu_k \SE}$, called the \emph{mutation of $\Gamma_{\SE}$ at $k$}, is obtained as follows. The vertices and frozen vertices of $\Gamma_{\SE}$ and $\mu_k(\Gamma_{\SE})$ coincide.
Furthermore $\mu_k(\Gamma_{\SE})$ has the same arrows as $\Gamma_{\SE}$, except:
\begin{itemize}
	\item[(i)] All arrows of $\Gamma_{\Sigma}$ with source or target $v_k$ get replaced in $\mu_k(\Gamma_{\Sigma})$ by the reversed arrow.
	\item[(ii)] For every pair of arrows $(h_1, h_2)\in \Gamma_{\Sigma}\times \Gamma_{\Sigma}$ with 
	$$v_k=\text{target of $h_1$}=\text{source of $h_2$}$$
	we add to $\mu_k(\Gamma_{\Sigma})$ an arrow from the source of $h_1$ to the target of $h_2$.
	\item[(iii)] If a $2$-cycles was obtained during (i) and (ii), the arrows of this $2$-cycle get canceled in $\mu_k(\Gamma_{\Sigma})$.
	\item[(iv)] Finally we erase all arrows between frozen vertices.
\end{itemize}

To a seed $\SE=(\Lambda, \langle \cdot, \cdot \rangle, \{e_k\}_{k\in M}, M_0)$ we assign the $\mathcal{A}$- and $\mathcal{X}$-cluster torus
$$\TTsea:=\text{Spec}\mathbb{Z}[A_k^{\pm 1} \mid k\in M] \quad \text{and} \quad \TTsex:=\text{Spec}\mathbb{Z}[X_k^{\pm 1} \mid k\in M].$$

We introduce birational $\mathcal{A}$-cluster transformations $\mu_k : \TTse   \dashedrightarrow \TT_{\mu_k \SE}$ and $\mathcal{X}$-cluster transformations ${\dual{\mu}_k} : \TTsex  \dashedrightarrow \TTx_{\mu_k \SE}$:
\begin{align}\label{amu}
 \mu_k^* A_{\ell} &= \begin{cases} 
\displaystyle\prod_{j \,:\, \langle e_j, e_k \rangle>0} \frac{A_j^{\langle e_j, e_k \rangle}}{A_k} + \displaystyle\prod_{j \,:\, \langle e_j, e_k \rangle < 0} \frac{A_j^{-\langle e_j, e_k \rangle}}{A_k} & \text{if $\ell=k$,} \\A_{\ell} &  \text{else,} \end{cases} \\\label{xmu}
\dual{\mu}_k^* X_\ell &= \begin{cases} X_k^{-1}\qquad\quad & \text{if $\ell=k$,} \\ X_\ell (1+X_k^{-\sgn \langle e_k, e_\ell \rangle})^{-\langle e_k,e_\ell \rangle} \qquad \quad\quad\,\,\,&  \text{else.} \end{cases}
\end{align}
For seeds $\SE$ and $\SEp$ obtained by a sequence of mutations we define $\ttsa : \TTse   \dashedrightarrow \TTsep$ and $\ttsx : \TTsex  \dashedrightarrow \TTsexp$ by composition.

\subsubsection{Seeds associated to reduced words}\label{redquiver} 
Following \cite{BFZ2} we associate to every reduced word $\ii\in \W$ a seed $\Sigma_{\ii}$. We throughout identify 
$$
\Sigma_{\ii} \text{ and } \ii,
$$
i.e. we denote the seed $\SE_{\ii}$ also by $\ii$. The quiver $\Gamma_{\ii}$ can be described as follows. We denote the vertices of $\Gamma_{\ii}$ by $\{v_k \mid k\in M \}$, where
$$M:=k\in \{-1,\ldots,-n\}\cup \{1,\ldots,N\}.$$ 
Using Definition \ref{levelroot} we write $v_{\ell}=v_{a,r}$ if $\beta_{\ell}=\beta_{a,r}$ and set $v_{a,0}:=v_{-a}$. The frozen vertices of $\Gamma_{\ii}$ are 
\begin{equation}\label{frlabel}
\{w_{-a}:=v_{a,0} \mid a\in I\} \cup \{w_a:=v_{a,m_a} \mid a\in I\}.
\end{equation}

In order to define the arrows in $\Gamma_{\ii}$ we introduce the following notion.
For $k\in [-n]$ we set $i_k=-k$. For $k\in M$ we denote by $k^+=k^+_{\ii}$ the smallest $\ell \in M$ such that $k<\ell$ and $i_{\ell}=i_k$. If no such $\ell$ exists, we set $k^+=N+1$. For $k\in [N]$, we further let $k^-$ be the largest index $\ell \in M$ with that $\ell<k$ and $i_{\ell}=i_k$. 

There is an edge connecting $v_k$ and $v_{\ell}$ with $k<\ell$ if at least one of the two vertices is mutable and one of the following conditions is satisfied:
\begin{enumerate}
	\item $\ell=k^+$,
	\item $\ell < k^+<\ell^+$, $c_{k,\ell}<0$ and $k,\ell \in [N]$.
\end{enumerate}
Edges of type (1) are called \emph{horizontal} and are directed from $k$ to $\ell$. Edges of type (2) are called \emph{inclined} and are directed from $\ell$ to $k$.
\begin{rem}The quiver $\Gamma_{\ii}$ associated to a reduced word $\ii$ has between any two vertices $v,w$ at most $1$ edge, which we denote by $[v,w]$.
\end{rem}

\begin{ex} Let $\mathfrak{g}=\text{sl}_3(\mathbb{C})$ and $\ii=(1,2,1)$. Then $\Gamma_{\ii}$ looks as follows:
	$$\xymatrix{
		&v_{-2} & & v_2 \ar[ld]\\
		v_{-1} \ar[rr] & & v_1 \ar[rr] \ar[lu] & & v_3.
	}$$
\end{ex}

\begin{lem}\label{braid1} Let $\jj\in \mathcal{W}(\w0)$ be obtained from $\ii\in \mathcal{W}(\w0)$ by a $2$-move or a $3$-move in position $k$. Then the transposition $(k,k+1)$ is an isomorphism of quivers $\Gamma_{\jj}\simeq\mu_k \Gamma_{\ii}.$
\end{lem}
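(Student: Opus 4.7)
The plan is to verify this lemma by direct case analysis on the type of move, separately treating the $2$-move and the $3$-move cases. In each case, I check that the bijection $\sigma = (k, k+1)$ on the vertex set $\{v_\ell\}_{\ell \in M}$ carries the edges of $\Gamma_{\jj}$ to those of $\mu_k \Gamma_{\ii}$ (with correct orientation), using the mutation rules from the definition: reverse all arrows at $v_k$, compose arrows through $v_k$, cancel $2$-cycles, and erase arrows between frozen vertices.

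For a $2$-move at position $k$ (so $c_{i_k, i_{k+1}} = 0$, hence $i_k \neq i_{k+1}$), the first observation is that $v_k$ and $v_{k+1}$ are not adjacent in $\Gamma_{\ii}$: no horizontal edge, since $k+1 \neq k^+_{\ii}$, and no inclined edge, since $c_{i_k, i_{k+1}} = 0$. Consequently mutation at $v_k$ produces no $2$-cycle through $v_{k+1}$, and the analysis decouples. I verify that the horizontal edges $v_{k^-_{\ii}} \to v_k$ and $v_k \to v_{k^+_{\ii}}$, after applying $\mu_k$ and then $\sigma$, match the horizontal edges $v_{(k+1)^-_{\jj}} \to v_{k+1}$ and $v_{k+1} \to v_{(k+1)^+_{\jj}}$ in $\Gamma_{\jj}$, using the identities $(k+1)^{\pm}_{\jj} = k^{\pm}_{\ii}$ and $k^{\pm}_{\jj} = (k+1)^{\pm}_{\ii}$. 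Inclined edges at $v_k$ or $v_{k+1}$ are handled analogously, using that the Cartan entries $c_{i_k, i_m}$ and $c_{i_{k+1}, i_m}$ for $m \notin \{k, k+1\}$ are unchanged between $\ii$ and $\jj$.

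For a $3$-move at position $k$, set $a = i_k$ and $b = i_{k-1} = i_{k+1}$, with $c_{a,b} = -1$. The local subquiver of $\Gamma_{\ii}$ on $\{v_{k-1}, v_k, v_{k+1}\}$ consists of the horizontal arrow $v_{k-1} \to v_{k+1}$ (from $(k-1)^+_{\ii} = k+1$) and two inclined arrows $v_k \to v_{k-1}$ and $v_{k+1} \to v_k$; the defining inequalities for the two inclined arrows follow from $c_{a,b} = -1$ together with a short verification of the relative position of the next occurrences of $a$ and $b$ around position $k$. Under $\mu_k$ the two inclined arrows reverse, the composition rule produces a new arrow $v_{k+1} \to v_{k-1}$, and this forms a $2$-cycle with the existing $v_{k-1} \to v_{k+1}$ that cancels; the local subquiver on these three vertices thus becomes $v_{k-1} \to v_k$, $v_k \to v_{k+1}$, $v_{k+1} \to v_{k-1}$. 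Applying $\sigma$ this matches the local subquiver of $\Gamma_{\jj}$, where the substring $(j_{k-1}, j_k, j_{k+1}) = (a, b, a)$ yields the same three arrows by a direct computation.

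The main technical obstacle is the bookkeeping of the ``external'' edges connecting $\{v_{k-1}, v_k, v_{k+1}\}$ to the remaining vertices $v_m$. For each such $m$, the existence and orientation of an edge depends on the relative order of $m^{\pm}$ with the indices $(k-1)^{\pm}, k^{\pm}, (k+1)^{\pm}$ and on the Cartan entries $c_{i_k, i_m}$ and $c_{i_{k\pm 1}, i_m}$. The modifications of these data when passing from $\ii$ to $\jj$ are explicit and localised, and they must be matched against the reversal of arrows at $v_k$ under $\mu_k$, the composed arrows generated by rule (ii), and the $2$-cycles cancelled by rule (iii). I organise the verification by the label $i_m$ and the sign pattern of the relevant Cartan entries; each sub-case reduces to a short direct check. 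The rule (iv) that arrows between frozen vertices are erased requires only minor extra care for edges incident to the frozen vertices $w_{\pm a}$.
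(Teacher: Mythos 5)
The paper disposes of this lemma by citing the construction of $\Gamma_{\ii}$ together with \cite[Theorem 3.5]{SSVZ}, so your plan of a self-contained case check is a genuinely different route and legitimate in principle; as described, however, it does not go through, and the failures are concrete. In the $3$-move case your claimed local picture on $\{v_{k-1},v_k,v_{k+1}\}$ is not correct in general: the inclined arrow $v_{k+1}\to v_k$ requires $k+1<k^+<(k+1)^+$, and the next occurrence of $b=i_{k+1}$ may well precede the next occurrence of $a=i_k$ --- for instance in type $A_3$ take $\ii=(2,1,2,3,2,1)$ and the $3$-move at $k=2$, where $2^+=6>5=3^+$, so that arrow is absent. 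Moreover your mutation bookkeeping is internally inconsistent: if the composed arrow $v_{k+1}\to v_{k-1}$ cancels against $v_{k-1}\to v_{k+1}$, it cannot also survive into your final local quiver; done correctly, mutation at the middle vertex $v_k$ leaves only $v_{k-1}\to v_k$ and $v_k\to v_{k+1}$ locally, which after the transposition does \emph{not} reproduce $\Gamma_{\jj}$. What does match $\Gamma_{\jj}$ (check it on the minors \eqref{atorusem}: the cluster variables at positions $k$ and $k+1$ are merely swapped, while the one at position $k-1$ is exchanged) is mutation at $v_{k-1}$, which is exactly the convention the paper itself uses later in the proof of Lemma \ref{diagwlX}; the index $\mu_k$ in the statement is an off-by-one that a direct verification ought to detect rather than ``confirm''.

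The $2$-move case is also wrong as you set it up: there the two seeds are related by the relabelling alone, with no mutation at all. Applying $\mu_k$ reverses the horizontal arrow $v_{k^-}\to v_k$, so after the transposition you obtain $v_{k+1}\to v_{k^-}$, whereas $\Gamma_{\jj}$ contains $v_{k^-}\to v_{k+1}$; in addition $\mu_k$ need not even be defined, since $v_k$ may be frozen. Finally, the part carrying the real content --- the edges joining $\{v_{k-1},v_k,v_{k+1}\}$ to the remaining vertices, including the inclined arrows into the frozen vertices $v_{-a}$ that are visible in the paper's own example for $\ii=(1,2,1)$ --- is only asserted to reduce to short direct checks. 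Since the purely local checks already fail as you set them up, that assertion is unsubstantiated: either carry out the full verification with the corrected mutation vertex $k-1$ (and the $2$-move case without mutation), or argue as the paper does via \cite[Theorem 3.5]{SSVZ}.
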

\begin{proof}
	The statement follows from the construction of $\Gamma_{\ii}$ and \cite[Theorem 3.5]{SSVZ}.
\end{proof}

We consider the families $(\TTsea)$ and $(\TTsex)$ of all tori corresponding to seeds $\Sigma$ which can be obtained from a seed $\Sigma=\ii$ corresponding to a reduced word $\ii \in \W$ by a sequence of mutations.
By \cite{BFZ2} the open double Bruhat cell $G^{\w0,e}$ is covered up to codimension $2$ by $(\TTsea)$ via 
\begin{align}\begin{split}\label{atorusem}
G^{\w0,e} &\dashrightarrow \TTsea,\\
g&\mapsto \left(\Delta_{s_{i_1}\cdots s_{i_{k}}\omega_{i_k},\omega_{i_k}} (g)\right)_{k\in M}.
\end{split}
\end{align}
The associated gluing maps are given in \eqref{amu}.
\begin{rem}
We defer from the convention in \cite{BFZ2} as follows. The seed we associate to a reduced word $\ii\in\W$ coincides with the seed associated to $-\ii$ in op. cit. obtained by reversing all arrows.
\end{rem}

The \emph{cluster space $\mathcal{A}$} and the \emph{dual cluster space $\mathcal{X}$} associated to $G^{\w0,e}$ is the scheme obtained by gluing the tori $(\TTsea)$ and $(\TTsex)$ via \eqref{amu} and \eqref{xmu}, respectively. We call the pair $(\mathcal{A},\mathcal{X})$ the cluster ensemble associated $G^{\w0,e}$.

We use the following fact later.
\begin{lem}\label{diaglev} For $a\in I$ and reduced words $\ii,\jj\in \W$ we have 
	$$\prod_{r=1}^{m_{a,\ii}} x_{a,r}= \prod_{r=1}^{m_{a,\jj}} x_{a,r} \circ\ttia.$$
\end{lem}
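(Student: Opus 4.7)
The plan is to argue by induction on the length of a minimal sequence of 2-moves and 3-moves transforming $\ii$ into $\jj$; such a sequence exists by the Matsumoto-Tits theorem, and $\ttia$ is defined as the composition of the corresponding elementary cluster mutations. The base case $\ii=\jj$ is trivial, so it suffices to treat the case where $\ii$ and $\jj$ differ by a single braid move at some position $k$.

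Suppose first that the move is a 2-move at $k$, so that $c_{i_k, i_{k+1}}=0$ and the vertices $v_k, v_{k+1}$ are non-adjacent in $\Gamma_{\ii}$. By Lemma \ref{braid1} the transition $\ttia$ reduces essentially to the transposition $(k,k+1)$, and the color-$a$ vertices of $\ii$ (as ordered in Definition \ref{levelroot}) are in canonical bijection with those of $\jj$ preserving level indices; the corresponding cluster variables are identified and $\prod_{r=1}^{m_{a,\ii}} x_{a,r}$ is manifestly preserved.

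Now suppose the move is a 3-move at position $k$, so that $i_{k-1}=i_{k+1}=a'$, $i_k=b$ and $c_{a',b}=-1$; then $\ttia$ is given by the cluster $\mathcal{A}$-mutation of \eqref{amu} at the vertex prescribed by Lemma \ref{braid1}, composed with the transposition $(k,k+1)$. If $a\notin\{a',b\}$ no color-$a$ vertex lies among positions $k-1,k,k+1$, the color-$a$ variables are all untouched, and the product is trivially preserved. The essential case is $a=a'$ (the case $a=b$ being symmetric): here $\ii$ has two consecutive color-$a$ vertices at positions $k-1$ and $k+1$ with level indices $r_0$ and $r_0+1$, while $\jj$ has a single color-$a$ vertex at position $k$ with level index $r_0$, and all other color-$a$ vertices coincide between $\ii$ and $\jj$. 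The assertion therefore reduces to the single identity
\[
x^{\ii}_{a,r_0}\, x^{\ii}_{a,r_0+1} \;=\; x^{\jj}_{a,r_0}\circ\ttia,
\]
which is exactly the content of the cluster exchange relation \eqref{amu} at the mutating vertex.

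The main obstacle is the explicit verification of this last identity. One must read off the local structure of $\Gamma_{\ii}$ around the mutating vertex, including both the horizontal edges to neighboring occurrences of $a'$ and $b$ and the inclined edges from other simple roots adjacent to $a'$ or $b$, and check that the two monomials appearing in \eqref{amu} conspire to produce the required product. A convenient way to carry this out is to pass via the embedding \eqref{atorusem} into the open double Bruhat cell $G^{w_0,e}$, on which each cluster variable $x^{\ii}_{a,r}$ becomes the generalized minor $\Delta_{s_{i_1}\cdots s_{i_{k(a,r)}}\omega_a,\,\omega_a}$; the identity then reduces to a Plücker-type relation among generalized minors which is well-known in the Fomin-Zelevinsky theory of double Bruhat cells and which encodes precisely the compatibility of the cluster mutation with the minor realization.
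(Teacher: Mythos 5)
Your reduction to a single braid move is fine, but the treatment of the $3$-move is where the argument genuinely breaks down. You claim that for the outer colour $a=a'$ the lemma reduces to $x^{\ii}_{a,r_0}\,x^{\ii}_{a,r_0+1}=x^{\jj}_{a,r_0}\circ\ttia$ and that this identity is ``exactly the content of the exchange relation \eqref{amu}''. Neither assertion is correct. The exchange relation at the mutated vertex equates the product of the old and the \emph{new} variable at that vertex with a \emph{sum} of two monomials in the neighbouring variables, so it cannot yield the purely monomial identity you need; moreover, after the relabelling of Lemma \ref{braid1} the genuinely new variable sits at the position of $\jj$ carrying the middle colour $b$, not colour $a$. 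The colour-$a$ vertex of $\jj$ inside the braid-move window carries, under \eqref{atorusem}, the minor $\Delta_{s_{j_1}\cdots s_{j_k}\omega_a,\omega_a}=\Delta_{s_{i_1}\cdots s_{i_{k+1}}\omega_a,\omega_a}=x^{\ii}_{a,r_0+1}$, so your key identity would force $x^{\ii}_{a,r_0}=1$. Concretely, for $\mathfrak{g}=\mathrm{sl}_3$, $\ii=(1,2,1)$, $\jj=(2,1,2)$, $a=1$ it reads $\Delta_{s_1\omega_1,\omega_1}\Delta_{w_0\omega_1,\omega_1}=\Delta_{w_0\omega_1,\omega_1}$, which is false. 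The same example shows that the asserted equality of products is simply not true for the $\mathcal{A}$-variables and the $\mathcal{A}$-transition $\ttia$, so no Pl\"ucker-type manipulation of generalized minors can rescue this route; and your remark that the case $a=b$ is ``symmetric'' also fails, since for the middle colour the number of occurrences increases and the extra member of the class is precisely the new variable at the mutated vertex.

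What the paper actually proves, and what is used later in the proof of Lemma \ref{transstringcluster}, is the $\mathcal{X}$-side statement: the transition map has to be read as the $\mathcal{X}$-cluster transition $\ttix$, and the invariant quantity is the product of the $\mathcal{X}$-coordinates over the whole colour class of $a$ (with the frozen vertex $v_{a,0}$ included). The paper's proof is a local computation with \eqref{xmu}: under the mutation at the $4$-valent vertex of the braid move the coordinate there is inverted and every neighbour is multiplied by a factor of the form $(1+X^{\pm1})^{\mp1}$, and one records how the members of each colour class transform and observes that these factors cancel inside the class. In the $\mathrm{sl}_3$ example the two colour-$1$ neighbours of the mutated vertex acquire the factors $(1+X_1)$ and $(1+X_1^{-1})^{-1}$, whose product is $X_1$ and exactly compensates the variable $X_1$ leaving the colour-$1$ class, while $X_1^{-1}$ joins the colour-$2$ class and is compensated there in the same way; note that the frozen coordinates $X_{-1},X_{-2}$ carry some of these factors, which is why they must be counted in the product. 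Your proposal never engages with these binomial factors at all, and this cancellation is the entire content of the lemma; it cannot be reached through the $\mathcal{A}$-exchange relation or the minor realization \eqref{atorusem}.
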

\begin{proof} 
		Without loss of generality we can assume that $\jj$ is obtained from $\ii$ by a $3$-move at position $\ell$ with $v_{\ell}=v_{a,s}$. The claim then follows since we have for $r\in[m_a^{\jj}]$
	$$
	x_{a,r} \circ \ttix = \begin{cases} 	x_{a,r}  & \text{if $r<s-1$,}\\ 
	x_{a,r} (1+(x_{a,s})^{-1})^{-1}& \text{if $r=s-1$,}
	\\ 	x_{a,r-1} (1+x_{a,s})& \text{if $r=s$,}
	\\ 	x_{a,r-1} & \text{if $r>s$.}
	\end{cases}
	$$
\end{proof}

\section{Gross-Hacking-Keel-Kontsevich potential and Berenstein-Kazhdan decoration function}\
\subsection{Gross-Hacking-Keel-Kontsevich potential}\label{GHKK}
Recall from Section \ref{minors} that the base affine space $\GN$ is the partial compactification of the open double Bruhat cell $G^{\w0,e}$: $$\GN=G^{\w0,e} \cup \displaystyle\bigcup_{\pm a\in [n]} D_a,$$
where $D_a$ is the divisor given by the vanishing locus of the functions $\Delta_{\omega_a,\omega_a}$ for $a<0$ and $\Delta_{\w0\omega_a,\omega_a}$ for $a>0$, corresponding to the frozen vertices of $\Gamma_{\ii}$ by \eqref{atorusem}.

In \cite{GHKK} a Landau-Ginzburg potential $W$ on the dual cluster space $\mathcal{X}$ associated to $\GN$ is defined as the sum $W=\sum_{\pm a\in [n]}W_a$ of certain global monomials $W_a$ attached to the divisors $D_a$. We are interested in $\restr{W}{\TTix}$ since the cone
\begin{equation*}
\xcs:=\left\{x \in \tro{\TTsex}  \,\middle\mid\,  \tro{\restr{W}{\TTsex}} (x)\ge 0\right\}
\end{equation*}
cut out by the tropcialization of $\restr{W}{\TTix}$, up to a natural conjecture (see Remark \ref{GHKKrem}), parametrizes a canonical basis for the ring of regular functions on the partial compactification $\GN$ of $G^{\w0,e}$.

Using \eqref{frlabel} we have the following definition of $W_a$, which in \cite[Corollary 9.17]{GHKK} is shown to be well-defined.
\begin{defi}
If there is no arrow in $\Gamma_{\SE}$ from the frozen vertex $w_a$ to a mutable vertex we call \emph{$\SE$ optimized for $w_a$} and have $\restr{W_a}{\TTsex}=(\xse{w_a})^{-1}$.
\end{defi}

For certain toric charts we have a closed explicit description of $\restr{W_a}{\TTix}$:
\begin{prop}\label{locpotGHKK1}
Every frozen vertex $w_a$ has an optimized seed. Furthermore, for $a\in I$ and $\ii=(i_1,\ldots,i_{N})\in \W$ we have
\begin{align}
\label{eq:rightGHKKpot}
\restr{W_{i_N}}{\TTix} (x) &=\xv{N}^{-1},\\\label{eq:leftGHKKpot}
\restr{W_{-a}}{\TTix}(x)&=\sum_{k=0}^{m_a-1}\prod_{\ell=0}^{k} \xvc{a}{\ell}^{-1}.
\end{align}
\end{prop}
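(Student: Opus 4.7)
The plan is to exhibit explicit optimized seeds for $w_{i_N}$ and each $w_{-a}$ and to compute $\restr{W}{\TTix}$ by pulling back the local description $X_{w}^{-1}$ along the corresponding chain of $\mathcal{X}$-mutations via \eqref{xmu}.

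For the first formula $\restr{W_{i_N}}{\TTix}(x) = x_N^{-1}$, I claim that $\Sigma_\ii$ itself is optimized for $w_{i_N}$. Identifying $w_{i_N}$ with $v_N$, a horizontal outgoing edge from $v_N$ would point to $v_{N^+} = v_{N+1}$, but $N+1 \notin M$; an inclined outgoing edge to $v_k$ with $k < N$ would require $N < k^+ < N^+ = N+1$, which admits no solutions. Hence $\Sigma_\ii$ is optimized for $w_{i_N}$, and the formula follows from the defining property of optimized seeds.

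For the second formula, I would construct an optimized seed for $w_{-a}$ from $\Sigma_\ii$ by mutating successively at the mutable vertices $v_{a, 1}, v_{a, 2}, \ldots, v_{a, m_a - 1}$ along the horizontal $a$-chain. The base case $m_a = 1$ is immediate: the unique outgoing edge $v_{-a} \to v_{a, 1} = w_a$ has both endpoints frozen, is erased by rule (iv), and $\Sigma_\ii$ is already optimized, giving $\restr{W_{-a}}{\TTix} = x_{a, 0}^{-1}$, matching the formula. For $m_a > 1$, I would prove by induction on $k$ that after mutating at $v_{a, 1}, \ldots, v_{a, k}$, the only outgoing edge of $v_{-a}$ to a mutable vertex is the newly created horizontal edge to $v_{a, k+1}$, and the pullback of $(X^{(k)}_{v_{-a}})^{-1}$ to $\TTix$ equals the partial sum $\sum_{j=0}^{k}\prod_{\ell=0}^{j}x_{a,\ell}^{-1}$. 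At each inductive step, a direct application of \eqref{xmu} at the edge $v_{-a} \to v_{a, k+1}$ followed by a telescoping manipulation produces the next partial sum. Taking $k = m_a - 1$ yields an optimized seed for $w_{-a}$, and the desired formula is the pullback of $1/X^{(m_a - 1)}_{v_{-a}}$.

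The main obstacle is the combinatorial verification that the chosen mutation sequence really lands at a seed optimized for $w_{-a}$. After mutating at $v_{a, k}$, path compositions through the outgoing edges of $v_{a, k}$ that are not part of the horizontal $a$-chain (i.e.\ inclined edges) can in principle create additional outgoing arrows from $v_{-a}$ to mutable vertices, which would obstruct optimality. One must check, using the explicit description of inclined edges in $\Gamma_\ii$ and their evolution under mutation, that every such extra arrow either has a frozen target (hence gets erased by rule (iv)) or is canceled against an already existing arrow at $v_{-a}$ via the two-cycle cancellation rule (iii). This is the delicate part; once it is verified, the telescoping pullback involves only the chain coordinates $x_{a, 0}, \ldots, x_{a, m_a - 1}$ and reduces to the stated identity.
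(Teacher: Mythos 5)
Your strategy coincides with the paper's: observe that $\Sigma_{\ii}$ is already optimized for $w_{i_N}$, and for $w_{-a}$ mutate along the horizontal chain $v_{a,1},\dots,v_{a,m_a-1}$, then compute $\restr{W_{-a}}{\TTix}$ by pulling $X_{v_{a,0}}^{-1}$ back through these mutations; the resulting telescoping does give the partial sums in \eqref{eq:leftGHKKpot}. The problem is that the single step carrying the real content of the proposition --- that after mutating at $v_{a,1},\dots,v_{a,k}$ the only arrow from $v_{a,0}$ to a mutable vertex is the one to $v_{a,k+1}$, so that the final seed is optimized for $w_{-a}$ --- is precisely what you defer with ``one must check''. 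As it stands this is a genuine gap, not a routine verification: a priori the inclined arrows attached to the chain can, under the mutations, feed new arrows out of $v_{a,0}$, and nothing in your outline rules this out. Note also that your telescoping computation silently uses the same control (it needs $v_{a,0}$ to be joined to the mutated quiver only through the chain, with a single arrow between any two vertices), so the two halves of your induction cannot be decoupled.

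The paper closes this by a minimal-counterexample argument which supplies the structural input you are missing. Suppose $j$ is minimal such that $\Gamma_{\ii}^{(j)}$ contains an arrow from $v_{a,0}$ to a mutable vertex $w\neq v_{a,j+1}$. Such an arrow can only be created by composition through the vertex just mutated, hence comes from an arrow of the previous quiver from the $a$-chain to $w$, which traces back to an inclined arrow of $\Gamma_{\ii}$. Because both of its endpoints are mutable, the combinatorics of inclined arrows in $\Gamma_{\ii}$ forces a companion inclined arrow $[w,v_{a,r}]$ with $r$ strictly smaller; this companion survives the mutations at $v_{a,1},\dots,v_{a,r-1}$, is transported along the chain by the subsequent mutations, and at step $j$ produces exactly the arrow that cancels the would-be arrow $[v_{a,0},w]$ by the two-cycle rule --- contradicting minimality. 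So your guess about the mechanism (cancellation via rule (iii)) is correct, but the existence of the cancelling arrow rests on this pairing property of inclined arrows between Dynkin-adjacent chains, and without proving it your induction does not go through. The treatment of $w_{i_N}$ and of the case $m_a=1$ in your proposal is fine (though the frozen-frozen edge is absent from $\Gamma_{\ii}$ by construction rather than erased by the mutation rule).
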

\begin{proof}
 By definition the quiver $\Sigma_{\ii}$ is optimized for the frozen vertex $v_{i_N}$ and \eqref{eq:rightGHKKpot} follows. It remains to show \eqref{eq:leftGHKKpot} and that for $a\in I$ the vertex $w_{-a}$ has an optimized seed.
 
Let $\ii\in \W$, $j\in [m_a-1]$ and let $\Gamma_{\ii}^{(j)}$ be the resulting quiver after applying the sequence of mutations at the vertices $v_{a,1},v_{a,2},\ldots,v_{a,j}$ to $\Gamma_{\ii}$.

Between two vertices there is at most one arrow in $\Gamma_{\ii}^{(j)}$. Furthermore, we have for $j\geq 2$
\begin{equation}\label{locopt}
[v_{0,a},v_{a,j+1}],\, [v_{a,j+1}, v_{a,j}]\in\Gamma_{\ii}^{(j)}.
\end{equation}
We prove that $[v_{0,a},v_{a,j+1}]$ is the only arrow in $\Gamma_{\ii}^{(j)}$ with source $v_{0,a}$. 
	
	Let $j$ be minimal such that there exist an arrow $[v_{0,a},w]$ in $\Gamma_{\ii}^{(j)}$ with $w\ne v_{a,j+1}$ and $w$ mutable. Then $[v_{a,j-1},w]$ is an arrow of $\Gamma_{\ii}^{(j-1)}.$ 
	Note that $[v_{a,j-1},w]$ has to be an inclined arrow of $\Gamma_{\ii}$. Since $w$ and  $v_{a,j-1}$ are mutable, there exists an inclined arrow $[w,v_{a,r}]$ in $\Gamma_{\ii}$ with $r<j-1$. This arrow stays unchanged under the sequence of mutations at $v_{a,1},v_{a,2},\ldots,v_{a,r-1}$, creates an arrow $[w, v_{a,r-1+s}]$ in $\Gamma_{\ii}^{(r+s)}$ for $r+s < j$ and cancels the arrow $[v_{0,a},w]$ in $\Gamma_{\ii}^{(j)}$ yielding a contradiction.
  
    Hence the seed $\SE$ corresponding to $\Gamma_{\ii}^{m_a-1}$ is optimized for $v_{a,0}=w_{-a}$. Furthermore, from \eqref{locopt} we recursively compute \eqref{eq:leftGHKKpot}.
\end{proof}
\begin{ex}
	For $\mathfrak{g}=\text{sl}_3(\mathbb{C})$ and $\ii=(1,2,1)$ we have
	\begin{align*}
	\restr{W_{-1}}{\TTix}(x)&=x_{-1}^{-1} + x_{-1}^{-1}x_1^{-1}, &
	\restr{W_{1}}{\TTix}(x)&=x_3^{-1},\\
	\restr{W_{-2}}{\TTix}(x)&=x_{-2}^{-1}, &
	\restr{W_{2}}{\TTix}(x)&=x_2^{-1} + x_1^{-1}x_2^{-1}.
	\end{align*}
\end{ex}

\begin{rem}\label{GHKKrem} In \cite{GHKK} a canonical basis for the ring of regular functions on an $\mathcal{A}$-cluster variety, called theta basis, is constructed under the assumptions given in op. cit. Definition 0.6. This assumption is called the full Fock-Goncharov conjecture and ensures in particular, that the theta basis is naturally identified with the tropical points of the corresponding $\mathcal{X}$-cluster variety. 

In the case of double Bruhat cells, Goodearl and Yakimov  announced in \cite{GY} (see \cite[Example 0.15]{GHKK}) the existence of a maximal green sequence which implies the full Fock-Goncharov conjecture for $G^{e,w_0}$ by \cite[Proposition 0.14]{GHKK}. 

By Proposition \ref{locpotGHKK1} every frozen vertex of $\Gamma_{\ii}$ has an optimized seed. Thus using \cite[Proposition 2.6.]{BFZ2}, \cite[Theorem 0.19, Lemma B.7]{GHKK} and the existence of a maximal green sequence for $G^{e,w_0}$, there exists a theta basis for the partial compactification $\GN$ of $G^{\w0,e}$ parametrized by $\xcs$.
\end{rem}

 \subsection{Berenstein-Kazhdan decorations}\label{BK}
In \cite[Corollary 1.25]{BK2}, Berenstein and Kazhdan introduced as part of the datum of a $G$-decorated geometric crystal the \emph{decoration function} $f^B=\sum_{a\in\pm I} f^B_a$ on $G$, where for $a\in I$
\begin{equation*}
f^B_{-a}(g)= \frac{\Delta_{\w0\omega_a, s_a\omega_a}(g)}{\Delta_{\w0\omega_a,\omega_a}(g)}, \qquad 
f^B_{a}(g) = \frac{\Delta_{\w0s_a\omega_a,\omega_a}(g)}{\Delta_{\w0\omega_a,\omega_a}(g)}.
\end{equation*}

For certain toric charts we have a closed explicit description of $f_a^B$: 

\begin{prop}\label{apotex}
For $a\in I$ and $\ii=(i_1,\ldots,i_{N})\in \W$ we have
	\begin{align}
	\label{eq:rightBKpot}
	\restr{f^B_{i_N}}{\TTia}(x) &=  \xv{N}^{-1}\xv{N^{-}} ,\\
	\label{eq:leftBKpot}
	\restr{f^B_{-a}}{\TTia}(x)&=\sum_{\substack{k\in [N] \\ i_k=a}} x_{k^-}^{-1} x_k^{-1}
	\prod_{\substack{\ell\in M \\ \ell<k<\ell^+}} x_{\ell}^{-c_{i_{\ell}, a}}.
	\end{align}
\end{prop}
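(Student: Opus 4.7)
For \eqref{eq:rightBKpot}, I would unpack the definition
\[
f^B_{i_N}(g) \;=\; \frac{\Delta_{\w0 s_{i_N}\omega_{i_N},\, \omega_{i_N}}(g)}{\Delta_{\w0\omega_{i_N},\, \omega_{i_N}}(g)},
\]
identify the denominator as $x_N(g)$ via \eqref{atorusem}, and simplify the numerator using the elementary identity $s_b\omega_a = \omega_a$ for $b\neq a$. Since $\w0 s_{i_N} = s_{i_1}\cdots s_{i_{N-1}}$ and by definition of $N^-$ none of the indices $i_{N^-+1},\ldots, i_{N-1}$ equals $i_N$, the reflections $s_{i_{N-1}}, s_{i_{N-2}}, \ldots, s_{i_{N^-+1}}$ (applied to $\omega_{i_N}$ in that right-to-left order) fix the weight, collapsing the numerator to $\Delta_{s_{i_1}\cdots s_{i_{N^-}}\omega_{i_N}, \omega_{i_N}}(g) = x_{N^-}(g)$.

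For \eqref{eq:leftBKpot}, the same ``$s_b$ fixes $\omega_a$'' argument reduces the denominator $\Delta_{\w0\omega_a,\omega_a}(g)$ to $x_{k^*}(g)$, where $k^*$ is the position of the last occurrence of $a$ in $\ii$. The substantive step is computing the numerator $\Delta_{\w0\omega_a, s_a\omega_a}(g)$. My plan exploits the polynomial identity
\[
\Delta_{\w0\omega_a, \omega_a}\!\left(g\cdot y_a(t)\right) \;=\; \Delta_{\w0\omega_a,\omega_a}(g) \,+\, t\,\Delta_{\w0\omega_a, s_a\omega_a}(g),
\]
which holds exactly in $t$ because $F_a^2 v_{\omega_a} = 0$ (so $y_a(t) v_{\omega_a} = v_{\omega_a} + t F_a v_{\omega_a}$) and $\overline{s}_a v_{\omega_a} = F_a v_{\omega_a}$ (both verified by an $SL_2$ calculation). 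The numerator is thus the coefficient of $t$ in the polynomial $x_{k^*}(g\cdot y_a(t))$. I would parametrize $g$ via a Berenstein--Fomin--Zelevinsky factorization $g = y_{i_1}(r_1)\cdots y_{i_N}(r_N)\cdot h$ with $(r_j, h)\in (\mathbb{C}^*)^N\times T$, express each $x_k(g)$ as a Laurent monomial in the $r_j$ and characters of $h$ via the chamber ansatz, and use the braid/commutation relations to absorb the right-multiplied $y_a(t)$ back into canonical factorized form. Each position $k$ with $i_k = a$ produces one ``slot'' through which $y_a(t)$ can be absorbed, contributing one summand to the $t$-linear term; translating back through the inverse chamber ansatz yields precisely the sum in \eqref{eq:leftBKpot}.

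The main obstacle will be the combinatorial bookkeeping in the chamber-ansatz step: the exponent pattern $-c_{i_\ell, a}$ for $\ell$ satisfying $\ell < k < \ell^+$ arises from the actions of the reflections $s_{i_\ell}$ on the weight $\omega_a$ as the insertion point of $y_a(t)$ sweeps through the factorization, and matching these contributions requires attentive tracking of horizontal versus inclined arrows at the mutable vertex $v_k$ in $\Gamma_\ii$. An alternative path that sidesteps the factorization is induction on $2$- and $3$-moves: verify \eqref{eq:leftBKpot} for a single convenient base reduced word, then invoke Lemma \ref{braid1} together with \eqref{amu} to propagate, checking that the right-hand side transforms in the same way as the invariantly-defined ratio of minors under the cluster mutation at $v_k$ in the $3$-move case (the $2$-move case being a mere index swap).
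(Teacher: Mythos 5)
Your argument for \eqref{eq:rightBKpot} is correct and is essentially what the paper intends: the paper disposes of that half by saying it ``follows from \eqref{atorusem}'', and your weight computation ($\w0 s_{i_N}\omega_{i_N}=s_{i_1}\cdots s_{i_{N^-}}\omega_{i_N}$ because the reflections at positions strictly between $N^-$ and $N$ fix $\omega_{i_N}$) supplies exactly the missing detail; it also works uniformly when $i_N$ occurs only once, since then $N^-\in -[n]$ and $x_{N^-}=\Delta_{\omega_{i_N},\omega_{i_N}}$.

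For \eqref{eq:leftBKpot} the paper does not compute anything: it quotes \cite[Equation (5.8)]{BZ2}, which is precisely the identity expressing $\Delta_{\w0\omega_a,s_a\omega_a}/\Delta_{\w0\omega_a,\omega_a}$ on $G^{\w0,e}$ as the sum of Laurent monomials on the right-hand side of \eqref{eq:leftBKpot}. You instead propose to re-derive that identity. Your linearization step is correct and nicely justified: $\Delta_{\w0\omega_a,\omega_a}(g\,y_a(t))=\Delta_{\w0\omega_a,\omega_a}(g)+t\,\Delta_{\w0\omega_a,s_a\omega_a}(g)$ holds exactly, and the denominator is indeed the frozen variable $x_{a,m_a}$. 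But the substantive content --- that sweeping $y_a(t)$ through a factorization $y_{i_1}(r_1)\cdots y_{i_N}(r_N)h$ and re-expressing via the Chamber Ansatz yields exactly the monomials $x_{k^-}^{-1}x_k^{-1}\prod_{\ell<k<\ell^+}x_\ell^{-c_{i_\ell,a}}$ --- is what you yourself flag as ``the main obstacle'', and it is not carried out; it is literally the content of the cited Berenstein--Zelevinsky equation. The braid-move alternative is in the same state: you would still need (i) an honest verification for one base reduced word, which is a genuine computation with generalized minors rather than an index manipulation, and (ii) the check that the right-hand side of \eqref{eq:leftBKpot} is compatible with the $\mathcal{A}$-mutation \eqref{amu} at the vertex affected by a $3$-move, and neither is done. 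So either complete one of these computations or, as the paper does, invoke \cite[Equation (5.8)]{BZ2}; as written, your proof of \eqref{eq:leftBKpot} has a gap at exactly the point the paper covers by citation.
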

\begin{proof}
Equality \eqref{eq:rightBKpot} follows from \eqref{atorusem}. Equality \eqref{eq:leftBKpot} follows from \eqref{atorusem} and \cite[Equation (5.8)]{BZ2}, which holds on $G^{w_0,e}$.
\end{proof}
\begin{ex}
For $\mathfrak{g}=\text{sl}_3(\mathbb{C})$ and $\ii=(1,2,1)$ we have
\begin{align*}
\restr{f^B_{-1}}{\TTia}(x)&=\frac{x_{-2}}{x_{-1}x_1}+ \frac{x_{2}}{x_1 x_3}, &
\restr{f^B_{1}}{\TTia}(x)&=\frac{x_{1}}{x_3},\\
\restr{f^B_{-2}}{\TTia}(x)&=\frac{x_{1}}{x_{-2} x_2}, &
\restr{f^B_{2}}{\TTia}(x)&={\frac{x_{-1}}{x_1} + \frac{x_{-2}x_3}{x_1 x_2}}.
\end{align*}
\end{ex}

We denote by $\acs$ the cone cut out by the decoration function $f^B$:
$$\acs=\{v \in \tro{{\mathbb{T}_{\Sigma}}} \mid \tro{\restr{f^B}{\mathbb{T}_{\Sigma}}} (v)\ge 0\}.$$

\section{Lusztig parametrization via cluster varieties}\label{lustigsec}
In this section we relate the cone  $\glci$ of Lusztig's graded parametrization to the cone $\xci$ cut out by the tropicalization of the Gross-Hacking-Keel-Kontsevich potential function $W$ introduced in Section \ref{GHKK}. Dually, we relate the cone ${\glcild}$ associated to the Langlands dual Lie algebra $\Lg$ to the cone $\aci$ cut out by the tropicalization of the decoration function $f^B$ due to Berenstein-Kazhdan defined in Section \ref{BK}.

Motivated by the \emph{Chamber Ansatz} due to Berenstein-Fomin-Zelevinsky \cite{BFZ} as well as by \cite[Equation (4.14)]{BZ2} and \cite[Section 5]{GKS} we introduce the following coordinate transformations using the notations of Section \ref{redquiver} and \eqref{astardef}.
First, we set for $k, \ell\in M=-[n] \cup [N]$
\begin{equation}
\label{sym}
\sym{k}{\ell} := \begin{cases} 1&\text{if $k < \ell < k^+$,}\\
-1 &\text{if $\ell=k$ or $\ell= k^+$}\\
0& \text{else.}\end{cases}
\end{equation}
\begin{defi}
We specify $\giotid \in\Hom(\gTTil, \TTix)$ and $\gCAi\in \Hom(\TTia,\gTTil)$:
\begin{align*}
(\giotid (\lambda, x))_{k} &= \begin{cases} x_{k^+}^{-1} &\text{if $k<0$,}\\
{x_k}{x_{k^+}}^{-1} &\text{if $k^+\in [N],$}\\
x_{k}\lambda_{a^*}^{-1} &\text{if $k^+=N+1$,}
\end{cases}\\
\gCAi (x)&=\left(
\left(x_{a^*,m_{a^*}}^{-1}\right)_{a\in I},  \left( 
\prod_{{\ell\in M}} x_{\ell}^{\sym{\ell}{k}}\right)_{k \in [N]} \right).
\end{align*}
\end{defi}
\begin{ex} Let $\mathfrak{g}=\text{sl}_3(\mathbb{C})$ and $\ii=(1,2,1)$. Then we have 
\begin{align*}
		\giotid \left(\lambda_1,\lambda_2,x_1,x_2,x_3\right)&=\left(\frac{1}{x_1}, \frac{1}{x_2},  \frac{x_1}{x_3}, \frac{x_2}{\lambda_1}, \frac{x_3}{\lambda_2}\right),\\
		\gCAi\left(x_{-1},x_{-2},x_1,x_2,x_3\right)&=\left(\frac{1}{x_2},\frac{1}{x_3},\frac{x_{-2}}{x_{-1}x_1}, \frac{x_1}{x_{-2}x_2},\frac{x_2}{x_1 x_3}\right).
\end{align*}
\begin{rem}
In Lemma \ref{unicones} we show that $\tro{\giotid}$ and $\tro{\gCAi}$ 
and consequently also ${\giotid}$ and ${\gCAi}$
are isomorphisms.
\end{rem}
\end{ex}
The families $(\giotid)$ and ($\gCAid$) have the following transformation behaviour.
\begin{lem}\label{diagwlX}
	For $\ii, \jj\in\W$ the following diagrams commutes.
	$$\xymatrix{
		\TTia \ar[dd]^{\trl^{\ii}_{\jj}} \ar[rr]^{\gCAi} & & \gTTil \ar[rr]^{\giotid}\ar[dd]_{\text{id} \times \trl^{\ii}_{\jj}} & &\TTix \ar[dd]^{\trs^{\ii}_{\jj}} \\
		& \\
		\TTiap \ar[rr]^{\gCAj} & & \gTTjl \ar[rr]^{\giotjd} & & \TTixp.}
	$$
\end{lem}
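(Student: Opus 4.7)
My strategy is to reduce to the case of a single braid move relating $\ii$ and $\jj$ and then verify each of the two squares by a local, componentwise computation.

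Both vertical arrows in the diagram are defined as compositions along a chosen sequence of $2$- and $3$-moves transforming $\ii$ into $\jj$, so it suffices to treat the case where $\jj$ is obtained from $\ii$ by a single such move at some position $k$. The $2$-move case is essentially bookkeeping: the Lusztig transition $\trl^{\ii}_{\jj}$ acts as the transposition $(k,k+1)$, the quivers $\Gamma_{\ii}$ and $\Gamma_{\jj}$ agree under the same transposition by Lemma \ref{braid1}, and the local combinatorial nature of the formulas defining $\gCAi$ and $\giotid$ makes the commutativity immediate on both squares.

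The content lies in the $3$-move case. Fix $k$ such that $\jj$ is obtained from $\ii$ by a $3$-move at position $k$; by Lemma \ref{braid1} the quiver $\Gamma_{\jj}$ equals $\mu_k\Gamma_{\ii}$ up to the transposition $(k,k+1)$. Consequently the left vertical arrow acts by the $\mathcal{A}$-exchange formula \eqref{amu} at $v_k$ composed with a relabeling, and the right vertical arrow by \eqref{xmu} at $v_k$ composed with the same relabeling. Since all of these maps are the identity outside positions $k-1, k, k+1$, the verification localises to these three coordinates.

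For the right-hand square, I would evaluate $\giotid$ at positions $k-1, k, k+1$, apply \eqref{xmu} at $v_k$, and compare with the result of first applying $\mathrm{id}\times\trl^{\ii}_{\jj}$ (via the explicit $3$-move formula of Definition \ref{eq:ltrans}) and then $\giotjd$. The arithmetic reduces to identifying the factor $1+X_k^{\pm 1}$ in \eqref{xmu} with the factor $x_{k-1}+x_{k+1}$ appearing in the $3$-move rule. For the left-hand square I would substitute the Chamber-Ansatz monomial $\prod_{\ell} x_\ell^{\sym{\ell}{m}}$ defining $\gCAi$ into \eqref{amu}; the exponents $\sym{\ell}{m}$ with $\ell, m \in\{k-1,k,k+1\}$ reshuffle in a controlled manner under $\mu_k$, and I would check that this reorganises into the $3$-move rational expressions predicted by composing $\trl^{\ii}_{\jj}$ with $\gCAj$.

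The main obstacle is the boundary behaviour: when $k$ is close to $N$, the defining formulas involve the weight factors $\lambda_{a^*}^{-1}$ in $\giotid$ and $x^{-1}_{a^*, m_{a^*}}$ in $\gCAi$, which must be shown to transform consistently. Here I would invoke property (2) of Definition \ref{kappa}, which was introduced precisely to make $\lusrpotia$ equivariant under $\mathrm{id}\times \trl^{\ii}_{\jj}$, together with a short case analysis on whether the mutated positions are extremal in the sense that $k^+ = N+1$ before or after the move. All remaining configurations reduce to the generic local calculation described above.
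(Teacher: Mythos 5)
Your overall strategy is the same as the paper's: reduce to a single $2$- or $3$-move, use Lemma \ref{braid1} to identify the vertical arrows with the $\mathcal{A}$- and $\mathcal{X}$-cluster transformations composed with the transposition relabelling, and then verify commutativity of the two squares by a direct computation; the paper's proof is exactly this and likewise leaves the computation to the reader. Two points in your outline need repair, although neither invalidates the method. First, with the paper's convention that a $3$-move at position $k$ alters the entries at $k-1,k,k+1$, the exchanged cluster variable is the one at position $k-1$, so the relevant mutation is at the $4$-valent vertex $v_{k-1}$ (as the paper's proof states), not at $v_k$; more importantly, your claim that the verification localises to the three coordinates $k-1,k,k+1$ is too strong. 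The $\mathcal{X}$-mutation at $v_{k-1}$ also rescales the coordinates at its remaining neighbours $(k-1)^-$ and $k^-$ (which may be frozen indices), and the components of $\giotid$ and $\gCAi$ at positions outside this window, as well as the $\lambda$-components built from last occurrences, involve the variables altered by the move; their agreement under the transposition relabelling (a bookkeeping check with the symbols $\sym{k}{\ell}$ and the maps $k\mapsto k^{+}$, which change when passing from $\ii$ to $\jj$) is part of the proof and is not automatic. Second, appealing to Definition \ref{kappa}(2) for the boundary cases is beside the point: that definition concerns the functions $\lusrpotia$, which do not appear in this lemma, and the identity one would want to extract from it is precisely what Theorem \ref{lempot} derives \emph{from} this lemma, so using it here would be either irrelevant or circular. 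The cases where the affected positions are last occurrences ($k^{+}=N+1$ before or after the move) must instead be handled inside the same direct computation, checking that the frozen variables and the $\lambda$-factors match up under the relabelling.
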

\begin{proof}
	Without loss of generality we can assume that $\jj$ is obtained from $\ii$ by a $3$-move at position $k$ with $i_{k}=a$.
Then by Lemma \ref{braid1}, the tori $\TTixp$ and $\TTiap$ are obtained from $\TTix$ and $\TTia$ by $\mathcal{X}$-cluster and $\mathcal{A}$-cluster transformation at the $4$-valent vertex $k-1$ of $\Gamma_{\ii}$, respectively. The commutativity of the diagram then can be checked by direct computation.
\end{proof}

We relate the GHKK-potential and the BK-decoration function to the functions $\lusrpotia$ and $\lusrpotiald$ introduced in Definition \ref{kappa} and \ref{kappavee}, respectively:
\begin{thm}\label{lempot}  For $a\in \pm I$ and $\ii \in \W$ we have 
	\begin{align}\label{lustigpot=cluster}
\lusrpotia&=	\restr{W_{a}}{\TTix}\circ \giotid,\\
\label{lustigpot=clustera}
\restr{f^B_{a}}{\TTia}&={\lusrpotiald}\circ \gCAi.
	\end{align}
\end{thm}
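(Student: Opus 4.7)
The plan is to verify each of the two identities by reducing it to a short list of explicit base cases and then extending via the compatibility of both sides with changes of reduced word; uniqueness in Definitions \ref{kappa} and \ref{kappavee} then forces the identities to hold for every $a \in \pm I$ and every $\ii \in \W$.

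For the first identity $\lusrpotia = \restr{W_a}{\TTix}\circ\giotid$, I would separate the two sign cases. When $a = -b$ with $b \in I$, the left-hand side $\luslpot{\ii}{b}(x) = \sum_r x_{b,r}$ is given by its explicit formula; plugging \eqref{eq:leftGHKKpot} for $\restr{W_{-b}}{\TTix}$ and the three-case formula for $\giotid$ into the right-hand side, the nested products telescope to exactly $\sum_r x_{b,r}$, establishing the identity. When $a = i_N$, \eqref{eq:rightGHKKpot} gives $\restr{W_{i_N}}{\TTix}(x) = x_N^{-1}$ while the third case in the definition of $\giotid$ gives $\giotid(\lambda,x)_N = x_N\lambda_{i_N^*}^{-1}$; composition yields $\lambda_{i_N^*}x_N^{-1}$, which is precisely the base case stipulated in Definition \ref{kappa}(1). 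To promote this to every positive $a \in I$, I would check that $\restr{W_a}{\TTix}\circ\giotid$ satisfies the transformation rule of Definition \ref{kappa}(2): the right square of Lemma \ref{diagwlX} gives $\giotjd\circ(\text{id}\times\trl^{\ii}_{\jj}) = \trs^{\ii}_{\jj}\circ\giotid$, and since $W_a$ is a global regular function on the $\mathcal{X}$-cluster variety, its pullback along the $\mathcal{X}$-mutation $\trs^{\ii}_{\jj}$ is itself; so the two sides of the identity transform the same way under any $2$- or $3$-move. Because every $a \in I$ occurs as $j_N$ for some $\jj \in \W$ reachable from $\ii$ by such moves, uniqueness in Definition \ref{kappa} concludes.

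The second identity $\restr{f^B_a}{\TTia} = \lusrpotiald\circ\gCAi$ is proved by the same two-step recipe with the obvious replacements. For $a = -b$ one expands $\luslpot{\ii}{b}(\gCAi(x))$ as a sum of products indexed by $k$ with $i_k = b$ and matches it against \eqref{eq:leftBKpot} term-by-term, using the three possible values of $\sym{\ell}{k}$ to account for the factors $x_{k^-}^{-1}x_k^{-1}$ and the Cartan-weighted tail. For $a = i_N$ one substitutes $\gCAi$ into Definition \ref{kappavee}(1) and reduces to the single-term formula \eqref{eq:rightBKpot}. The extension to every positive $a$ invokes the left square of Lemma \ref{diagwlX} together with the $\mathcal{A}$-cluster invariance of $f^B$ under $\ttia$, by the same transformation-plus-uniqueness argument as for the first identity.

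The main technical obstacle is the base case $a = i_N$ of the second identity: one needs the prefactor $\prod_b\lambda_{b^*}^{c_{i_N,b}}$ from Definition \ref{kappavee}(1), after substituting $\lambda_{b^*} = x_{b,m_b}^{-1}$ and the explicit product formula for the $N$-th coordinate of $\gCAi$, to cancel down to $x_N^{-1}x_{N^-}$. This is a combinatorial cancellation involving the last-occurrence coordinates $x_{b,m_b}$ weighted by the Cartan entries $c_{i_N,b}$. A secondary hurdle is matching the $\mathcal{A}$- and $\mathcal{X}$-cluster mutation formulas \eqref{amu} and \eqref{xmu} against the transitions $\trs^{\ii}_{\jj}$ and $\trl^{\ii}_{\jj}$ coming from $2$- and $3$-moves, which is what makes the cluster invariances of $W$ and $f^B$ actually usable in the transformation steps.
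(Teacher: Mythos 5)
Your proposal follows essentially the same route as the paper: the negative cases are settled by direct substitution of the explicit formulas of Propositions \ref{locpotGHKK1} and \ref{apotex}, the positive cases are reduced to the base case $a=i_N$ (including the same cancellation of the Cartan-weighted prefactor against the last-occurrence coordinates for $f^B_{i_N}$), and the extension to arbitrary $a>0$ uses the commutative diagrams of Lemma \ref{diagwlX} together with the invariance of $W$ and $f^B$ under chart changes and the defining recursion in Definitions \ref{kappa} and \ref{kappavee}. Your write-up merely makes explicit the uniqueness argument that the paper leaves implicit, so it is correct and not a genuinely different proof.
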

	\begin{proof}
For $a<0$ equalities \eqref{lustigpot=cluster} and \eqref{lustigpot=clustera} follow directly from Proposition \ref{locpotGHKK1} and Proposition \ref{apotex}, respectively. For $a>0$ equality \eqref{lustigpot=cluster}  follows directly from Lemma \ref{diagwlX} and Proposition \ref{locpotGHKK1}. In order to show \eqref{lustigpot=clustera} we compute using Lemma \ref{diagwlX} and Proposition \ref{apotex} for $a=i_N$:
$$
{\lusrpotiald}\circ \gCAi (x) = (\prod_{b\in I}x_{b,m_b}^{-c_{a,b}})  x_{a,m_a-1}x_{a,m_a}\prod_{
b\neq a} x_{b,m_b}^{c_{a,b}} = \frac{x_{N^-}}{x_N} = \restr{f^B_{a}}{\TTia} (x).
$$
\end{proof}
As a direct corollary of Theorem \ref{lempot} we obtain:
\begin{cor}\label{kappareg} For $a\in I$ the functions $\lusrpotia$ and $\lusrpotiald$ are regular.
\end{cor}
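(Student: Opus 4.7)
The plan is to deduce regularity as a direct corollary of Theorem \ref{lempot}, reading it as an expression of the a priori rational maps $\lusrpotia$ and $\lusrpotiald$ as pull-backs of known regular functions along invertible monomial changes of variables. No new computation is really needed; the task is to check that each ingredient of the identities
\[
\lusrpotia = \restr{W_a}{\TTix} \circ \giotid,
\qquad
\restr{f^B_a}{\TTia} = \lusrpotiald \circ \gCAi
\]
is regular on the torus in question, and that the morphisms of tori appearing on the right can be inverted when necessary.

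For $\lusrpotia$ the argument is most direct. By the construction of the GHKK-potential recalled in Section \ref{GHKK}, $W_a$ is a global monomial, in particular a regular function on the cluster variety $\mathcal{X}$; hence its restriction $\restr{W_a}{\TTix}$ to the toric chart $\TTix$ is a Laurent polynomial in the coordinates of $\TTix$. Inspecting the explicit formulas for the components of $\giotid$, each entry is a Laurent monomial in $(\lambda, x)$, so $\giotid \in \Hom(\gTTil,\TTix)$ is a morphism of algebraic tori. The composition of a Laurent polynomial with a morphism of tori is a Laurent polynomial, so Theorem \ref{lempot} forces $\lusrpotia$ to be regular on $\gTTil$.

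For $\lusrpotiald$ one proceeds similarly but in the reverse direction. From Proposition \ref{apotex} the function $\restr{f^B_a}{\TTia}$ is visibly a Laurent polynomial in the coordinates of $\TTia$, so it is regular on the torus. The map $\gCAi \in \Hom(\TTia, \gTTil)$ is given coordinate-wise by Laurent monomials, and by Lemma \ref{unicones} (whose proof does not depend on the present corollary) it is an isomorphism of tori; consequently $(\gCAi)^{-1}$ is again a monomial isomorphism. Composing the Laurent polynomial $\restr{f^B_a}{\TTia}$ with $(\gCAi)^{-1}$ yields a Laurent polynomial, and this composition equals $\lusrpotiald$ by Theorem \ref{lempot}, so $\lusrpotiald$ is regular on $\gTTil$.

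The only potential friction point is the forward reference to Lemma \ref{unicones} for the invertibility of $\gCAi$; however, invertibility can be checked directly on the given explicit formula by solving for the $x_\ell$ from top to bottom using \eqref{sym}, which is entirely mechanical, so there is no circularity. Nothing else in the argument is nontrivial: the whole content of the corollary is the transport of regularity from the GHKK side (for $\lusrpotia$) and from the Berenstein--Kazhdan side (for $\lusrpotiald$) through the monomial changes of coordinates furnished by $\giotid$ and $\gCAi$.
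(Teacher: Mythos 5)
Your argument is correct and is precisely the reasoning the paper leaves implicit when it states the corollary as a direct consequence of Theorem \ref{lempot}: the right-hand sides are regular because $\restr{W_a}{\TTix}$ and $\restr{f^B_a}{\TTia}$ are Laurent polynomials on their tori (Propositions \ref{locpotGHKK1} and \ref{apotex}) and $\giotid$, $\gCAi$ are monomial maps, with $\gCAi$ invertible by its triangular exponent matrix, so no circularity with Lemma \ref{unicones} arises. Nothing further is needed.
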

Theorem \ref{lempot} has the following implications for the interplay of parametrizations of canonical bases.
\begin{cor}\label{lusthm} We have the following equality of cones: 
	\begin{align*}
 \xci&=\tro{\giotid} (\glci), \\
\glcild&=\tro{\gCAi} (\aci).
	\end{align*}
\end{cor}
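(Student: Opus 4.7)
The plan is to derive both equalities as direct consequences of Theorem~\ref{lempot}, using two basic properties of tropicalization: first, functoriality under composition, $\tro{f\circ g} = \tro{f}\circ\tro{g}$; and second, that if $f=\sum_i f_i$ is a sum of positive rational functions, then $\tro{f}(y) = \min_i \tro{f_i}(y)$, so $\tro{f}(y)\geq 0$ precisely when $\tro{f_i}(y)\geq 0$ for every $i$.

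Applying the sum-property to $\restr{W}{\TTix} = \sum_{a\in \pm I} \restr{W_a}{\TTix}$ rewrites $\xci$ as the set of $y\in\tro{\TTix}$ satisfying the family of inequalities $\tro{\restr{W_a}{\TTix}}(y)\geq 0$ for $a\in\pm I$; the analogous statement identifies $\aci$ with the set cut out by the inequalities $\tro{\restr{f^B_a}{\TTia}}(y)\geq 0$. Tropicalizing the two identities of Theorem~\ref{lempot} now yields, for each $a\in\pm I$,
\begin{align*}
\tro{\luslpot{\ii}{a}} &= \tro{\restr{W_a}{\TTix}}\circ\tro{\giotid},\\
\tro{\restr{f^B_a}{\TTia}} &= \tro{\luslpotld{\ii}{a}}\circ\tro{\gCAi}.
\end{align*}
Each family of identities implies the following: $(\lambda,x)\in\glci$ if and only if $\tro{\giotid}(\lambda,x)$ satisfies the defining inequalities of $\xci$; and $y\in\aci$ if and only if $\tro{\gCAi}(y)$ satisfies the defining inequalities of $\glcild$.

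To promote these pointwise biconditionals to equalities of cones, I invoke Lemma~\ref{unicones}, which asserts that $\tro{\giotid}$ and $\tro{\gCAi}$ are bijections of the ambient integer lattices. Bijectivity immediately yields $\xci = \tro{\giotid}(\glci)$ and $\glcild = \tro{\gCAi}(\aci)$.

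The argument is entirely formal once Theorem~\ref{lempot} and Lemma~\ref{unicones} are in hand, and no step presents a real obstacle. The only point requiring attention is a bookkeeping one: one must match the sign and index conventions so that the families $\{W_a\}_{a\in\pm I}$ and $\{f^B_a\}_{a\in\pm I}$ are in bijective correspondence with the families of potential functions appearing on the right-hand sides of Theorem~\ref{lempot}, which amounts to tracking the identification $\etaa_{\ii,a}\leftrightarrow W_{-a}$ (and its Langlands-dual analogue) dictated by the conventions used there.
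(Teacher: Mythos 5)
Your proposal is correct and is essentially the paper's own (one-line) argument: tropicalize the two identities of Theorem~\ref{lempot}, using that tropicalization turns the sums $W=\sum_a W_a$ and $f^B=\sum_a f^B_a$ of positive summands into minima, so that $\xci$ and $\aci$ are cut out by the summand-wise inequalities, and then transport the inequalities along the monomial changes of coordinates. The only caution is your citation of Lemma~\ref{unicones} for bijectivity: as stated that lemma concerns the cone isomorphisms and its proof in the paper invokes this very corollary, so you should instead quote only the independent part of its proof (the tropicalizations $\tro{\giotid}$ and $\tro{\gCAi}$ are given by triangular integer matrices with diagonal entries $-1$, hence are lattice automorphisms of $\mathbb{Z}^{n+N}$), which is all your argument needs and avoids any circularity.
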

\begin{proof}
The statement follows by tropicalization Theorem \ref{lempot}.
\end{proof}

\section{string parametrization via cluster varieties}
In analogy to Section \ref{lustigsec} we relate in this section the graded string cone $\gr\sci$ to the cone $\xci$ cut out by the tropicalization of the GHKK-potential function $W$.
Dually, we relate the cone ${\gscild}$ associated to the Langlands dual Lie algebra $\Lg$ to the cone $\aci$ cut out by the tropicalization of the BK-decoration function $f^B$.

Motivated by the \emph{Chamber Ansatz} due to Berenstein-Fomin-Zelevinsky \cite{BFZ} as well as by \cite[Equation (4.14)]{BZ2} and \cite[Section 5]{GKS} we introduce the following coordinate transformations using the notations of Section \ref{redquiver}, \eqref{sym} and \eqref{astardef}.
\begin{defi}
	We specify $\gCAid \in\Hom(\gTTis, \TTix)$ and $\gioti\in \Hom(\TTia,\gTTis)$:
	\begin{align*}
	(\gCAid (\lambda, x))_{k} &= \begin{cases} \displaystyle
	\lambda_{-k}^{-1} \prod_{\ell\in[N]} x_{\ell}^{c_{i_\ell, i_{k}}+\{k,\ell\}}
    &\text{if $k<0$,}\\[22pt]\displaystyle
	\prod_{\ell\in [N]} x_{\ell}^{\sym{k}{\ell}}&\text{else,}
	\end{cases}\\
	\gioti x&=\left(\left(
	x_{a,m_{a}}^{-1}\right)_{a\in I}, \left({x_{k}}^{-1}{x_{k^{-}}} \right)_{k	 \in [N]}\right).
	\end{align*}
\end{defi}
\begin{rem}
	In Lemma \ref{unicones} we show that $\tro{\gCAid}$ and $\tro{\gioti}$  
	and consequently also ${\gCAid}$ and ${\gioti}$
	are isomorphisms.
\end{rem}
\begin{ex} 
		For $\mathfrak{g}=\text{sl}_3(\mathbb{C})$ and $\ii=(1,2,1)$ we have
\begin{align*}
\gCAid\left(\lambda_1,\lambda_2,x_1,x_2,x_3\right)&=\left( \frac{x_1x_3^2}{\lambda_1x_2},\frac{x_2}{\lambda_2x_3},\frac{x_2}{x_1x_3},\frac{x_3}{x_2},\frac{1}{x_3}\right),\\
\gioti\left(x_{-1},x_{-2},x_1,x_2,x_3\right)&=\left(\frac{1}{x_3},\frac{1}{x_2},\frac{x_{-1}}{x_1}, \frac{x_{-2}}{x_2},\frac{x_1}{x_3}\right).
\end{align*}
\end{ex}
The families $(\gCAid)$ and ($\gioti$) have the following transformation behaviour.
\begin{lem}\label{transstringcluster}
		For $\ii, \jj\in\W$ the following diagrams commutes.
	$$\xymatrix{
		\TTia \ar[dd]^{\trl^{\ii}_{\jj}} \ar[rr]^{\gioti} & & \gr\TTis \ar[rr]^{\gCAid}\ar[dd]_{\text{id} \times \trs_{\jj}^{\ii}} & &\TTix \ar[dd]^{\trs^{\ii}_{\jj}} \\
		& \\
		{\TTiap} \ar[rr]^{\gioti} & & \gTTjs \ar[rr]^{\gCAjd} & & \TTixp.}
	$$
\end{lem}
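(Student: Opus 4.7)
The plan is to emulate the proof of Lemma \ref{diagwlX}. Since every pair of reduced words is connected by a sequence of $2$-moves and $3$-moves, and the maps $\trl^{\ii}_{\jj}$ and $\trs^{\ii}_{\jj}$ are, by construction, compositions of the transition maps corresponding to single moves, it suffices to verify that both squares commute when $\jj$ is obtained from $\ii$ by a single $2$- or $3$-move at some position $k$.

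For a $2$-move at position $k$ (so $c_{i_k, i_{k+1}} = 0$), all four transition maps on $\TTia$, $\gTTis$, $\gTTjs$ and $\TTix$ act simply by swapping the $k$-th and $(k+1)$-th coordinates, while leaving the frame $\mathbb{G}_m^I$-factor fixed. Since $\gioti$ and $\gCAid$ are defined via the combinatorial data $k^\pm$ and $\sym{k}{\ell}$ attached to $\Gamma_{\ii}$, and these are simply relabeled under a $2$-move, both squares commute tautologically.

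For a $3$-move at position $k$ with $i_k=a$ and $i_{k-1}=i_{k+1}=b$, Lemma \ref{braid1} identifies $\Gamma_{\jj}$ with the mutation $\mu_k \Gamma_{\ii}$ at the (necessarily $4$-valent) mutable vertex $v_k$. Comparing Definition \ref{eq:ltrans} with \eqref{amu} shows that $\trl^{\ii}_{\jj}:\TTia\to\TTiap$ agrees with the $\mathcal{A}$-cluster mutation $\ttia$ at $v_k$, and similarly $\trs^{\ii}_{\jj}:\TTix\to\TTixp$ agrees with the $\mathcal{X}$-cluster mutation $\ttix$ at $v_k$ given by \eqref{xmu}. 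Only the coordinates indexed by $k-1$, $k$, $k+1$ are affected by these mutations, so the commutativity of each of the two squares reduces to a finite direct calculation at these positions; that $\gioti$ and $\gCAid$ preserve the $\mathbb{G}_m^I$-factor on the nose is guaranteed by Lemma \ref{diaglev}.

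The main obstacle is the bookkeeping for the right-hand square: under the $3$-move the combinatorial symbols $\sym{k}{\ell}$ and the predecessor/successor indices $k^\pm$ change at positions $k-1,k,k+1$, and one must verify that substituting the $\trs$-formula into $\gCAjd$ produces exactly the $\mathcal{X}$-mutation formula \eqref{xmu} applied to $\gCAid$. The left square is somewhat simpler because $\gioti$ is a pure Laurent monomial in the $x_\ell$'s, so the verification amounts to matching the exponents appearing after clearing denominators in $\trl^{\ii}_{\jj}$ with those produced by $\trs^{\ii}_{\jj}\circ\gioti$ at the three affected indices. Once these explicit local computations are carried out, the lemma follows.
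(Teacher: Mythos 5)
Your skeleton — reduce to a single $2$- or $3$-move and then do a local direct computation, with Lemma \ref{diaglev} handling the $\mathbb{G}_m^I$-factor — is the same as the paper's, but your $3$-move case contains a genuine error: the claim that $\trl^{\ii}_{\jj}$ agrees with the $\mathcal{A}$-cluster mutation and that $\trs^{\ii}_{\jj}$ agrees with the $\mathcal{X}$-cluster mutation is false. The maps $\trl^{\ii}_{\jj}$ and $\trs^{\ii}_{\jj}$ are defined on the $N$-dimensional tori $\TTil$ and $\TTis$ and change the three coordinates $k-1,k,k+1$ (through $x_{k-1}+x_{k+1}$, resp.\ $x_{k-1}x_{k+1}+x_k$), whereas the transition maps between the $(n+N)$-dimensional charts $\TTia,\TTiap$ and $\TTix,\TTixp$ are the cluster transformations $\ttia$ and $\ttix$: a single exchange relation \eqref{amu}, resp.\ \eqref{xmu}, at the $4$-valent mutable vertex, composed with the relabelling transposition of Lemma \ref{braid1} (with the paper's convention for a $3$-move at position $k$, i.e.\ $i_{k-1}=i_{k+1}$, the mutated vertex is $v_{k-1}$, not $v_k$). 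These are genuinely different maps — the labels on the outer vertical arrows of the xymatrix are an abuse of notation and do not even match dimensions — and the entire content of the lemma is that $\gioti$ and $\gCAid$ intertwine the cluster transformations with the \emph{different} map $\text{id}\times\trs^{\ii}_{\jj}$; if the verticals literally coincided there would be little to prove.

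The error is not cosmetic, because it changes the identity you would verify in the left square: comparing $\giotj\circ\trl^{\ii}_{\jj}$ with $(\text{id}\times\trs^{\ii}_{\jj})\circ\gioti$ fails. Already for $\mathfrak{g}=\mathrm{sl}_3$, $\ii=(1,2,1)$, $\jj=(2,1,2)$, the $\mathbb{G}_m^I$-component of $(\text{id}\times\trs^{\ii}_{\jj})\circ\gioti$ is $(x_3^{-1},x_2^{-1})$, while composing $\giotj$ with the Lusztig transition $\trl^{\ii}_{\jj}$ gives $\bigl((x_1+x_3)^{-1},(x_1+x_3)/(x_1x_2)\bigr)$, however one extends $\trl^{\ii}_{\jj}$ to the frozen coordinates. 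The correct local checks, which are what the paper carries out, are $\giotj\circ\ttia=(\text{id}\times\trs^{\ii}_{\jj})\circ\gioti$ and $\gCAjd\circ(\text{id}\times\trs^{\ii}_{\jj})=\ttix\circ\gCAid$, the latter using Lemma \ref{diaglev} together with the identity $\prod_{r=1}^{m_a}(\gCAid(\lambda,x))_{a,r}=\lambda_a^{-1}$ for the frozen bookkeeping. Your description of the right-hand square is in effect this correct check (despite the faulty justification), but your left-hand square, as written, verifies the wrong equation, so the proof as proposed does not go through without replacing $\trl^{\ii}_{\jj}$ by the $\mathcal{A}$-cluster transformation $\ttia$.
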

\begin{proof} 
	Without loss of generality we can assume that $\jj$ is obtained from $\ii$ by a $3$-move at position $k$ with $i_{k}=a$.
Then by Lemma \ref{braid1}, the tori $\TTixp$ and $\TTiap$ are obtained from $\TTix$ and $\TTia$ by $\mathcal{X}$-cluster and $\mathcal{A}$-cluster transformation at the $4$-valent vertex $k-1$ of $\Gamma_{\ii}$, respectively. The commutativity of the left diagram then can be checked by direct computation. The commutativity of the right diagram is obtained by direct computation using Lemma \ref{diaglev} and that for $a\in I$
	$$
	\prod_{r=1}^{m_a} (\gCAid (\lambda, x))_{a,r} = \lambda_a^{-1}.
	$$
 \end{proof}

We relate the GHKK- and the BK-decoration function to the functions $\strpotia$ and $\strpotiald$ introduced in Definition \ref{nu} and \ref{nulld}, respectively:
\begin{thm}\label{string2}  For $a\in \pm I$ and $\ii \in \W$ we have 
	\begin{align}\label{strpot=clustera}
	\stlpotia&=	\restr{W_{a}}{\TTix}\circ \gCAid,\\
	\label{strpot=cluster}
	\restr{f^B_{a}}{\TTia}&={\stlpotiald}\circ \gioti.
	\end{align}
\end{thm}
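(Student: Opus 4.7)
The argument mirrors the proof of Theorem \ref{lempot}, splitting into the cases $a \in I$ and $a \in -I$.

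For the \emph{positive case} $a \in I$, I would first verify both identities in the base case $a = i_N$ and then propagate to arbitrary $a \in I$ via Lemma \ref{transstringcluster}. The base case is a direct check: \eqref{eq:rightGHKKpot} gives $\restr{W_{i_N}}{\TTix}(y) = y_N^{-1}$, while the formula for $\gCAid$ implies $(\gCAid(\lambda,x))_N = x_N^{-1}$, since the only index $\ell \in [N]$ contributing a nonzero $\sym{N}{\ell}$ is $\ell = N$ (because $N^+ = N+1 \notin [N]$). Hence $\restr{W_{i_N}}{\TTix} \circ \gCAid(\lambda,x) = x_N = \stlpot{\ii}{i_N}(x)$, which is \eqref{strpot=clustera} at $a=i_N$. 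Similarly \eqref{eq:rightBKpot} gives $\restr{f^B_{i_N}}{\TTia}(x) = x_N^{-1}x_{N^-}$, which equals the $N$-th string-component of $\gioti(x)$ and hence $\stlpotld{\ii}{i_N}(\gioti(x))$, establishing \eqref{strpot=cluster} at $a=i_N$. To propagate to general $a \in I$, choose $\jj \in \W$ ending at $j_N = a$. Applying the base case to $\jj$ and pulling back along Lemma \ref{transstringcluster} settles both identities for $\ii$: the right square combined with the defining transformation $\stlpot{\ii}{a} = \stlpot{\jj}{a} \circ \trs^{\ii}_{\jj}$ and the global regularity of $W_a$ on $\mathcal{X}$ gives \eqref{strpot=clustera}; the left square combined with the analogous rule for $\stlpotld{\ii}{a} = \stlpot{\ii}{a}$ (which depends only on the string variables) and the global regularity of $f^B_a$ on $\mathcal{A}$ gives \eqref{strpot=cluster}.

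For the \emph{negative case} $a = -b$ with $b \in I$, all ingredients on both sides of the two identities are fully explicit: $\restr{W_{-b}}{\TTix}$ and $\restr{f^B_{-b}}{\TTia}$ are given by \eqref{eq:leftGHKKpot} and \eqref{eq:leftBKpot}, the potentials $\strpot{\ii}{b}$ and $\strpotld{\ii}{b}$ are defined in Definitions \ref{nu} and \ref{nulld}, and $\gCAid$, $\gioti$ are explicit. Both identities then reduce to a term-by-term matching of two sums indexed by the occurrences $k \in [N]$ with $i_k = b$.

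\textbf{Main obstacle.} The bookkeeping in the negative case is the delicate step. For \eqref{strpot=clustera} one must check that the telescoping sum $\sum_{k=0}^{m_b-1} \prod_{\ell=0}^{k} (\gCAid(\lambda,x))_{b,\ell}^{-1}$ collapses, through the combinatorics of $\sym{\cdot}{\cdot}$ and the Cartan entries $c_{i_\ell, b}$ appearing in the $\ell=0$ factor, to the sum $\lambda_b \sum_{k\,:\,i_k=b} x_k^{-1} \prod_{\ell > k} x_\ell^{-c_{i_\ell, b}}$. For \eqref{strpot=cluster} one must in addition reconcile the Langlands-dual prefactor $\prod_{c \in I} \lambda_c^{c_{b,c}}$ with the frozen-variable contributions extracted from $\prod_{\ell \in M,\,\ell<k<\ell^+} x_\ell^{-c_{i_\ell, b}}$ via the frozen component $(x_{a, m_a}^{-1})_{a \in I}$ of $\gioti$. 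Once this accounting is carried out, the two sums match slot by slot and the theorem follows.
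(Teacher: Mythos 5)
Your proposal is correct and takes essentially the same route as the paper: the case $a\in I$ is handled via the explicit formulas of Propositions \ref{locpotGHKK1} and \ref{apotex} at the last letter $a=i_N$ transported through Lemma \ref{transstringcluster}, and the case $a\in -I$ by direct comparison of the explicit expressions for $W_{-b}$, $f^B_{-b}$, $\strpot{\ii}{b}$, $\strpotld{\ii}{b}$ with $\gCAid$ and $\gioti$. The bookkeeping you defer in the negative case is precisely the telescoping product identity (and the one-line substitution for $f^B_a$ reconciling the frozen-variable prefactor) that the paper records in its proof, and it goes through exactly as you predict.
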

\begin{proof}
	For $a>0$ equalities \eqref{strpot=clustera} and \eqref{lustigpot=cluster} follow directly from Lemma \ref{transstringcluster} together with Proposition \ref{locpotGHKK1} and Proposition \ref{apotex}, respectively.
	 We define $k(a,s)$ by $v_{k(a,s)}:=v_{a,s}$. 
For $a<0$ equality \eqref{strpot=clustera} follows from Proposition \ref{locpotGHKK1} and 
$$
\prod_{r=0}^s \prod_{\ell\in [N]} x_{\ell}^{\sym{k(-a,r)}{\ell}} = x_{-a,s+1} \prod_{\ell \leq k(-a,s+1)} x_{\ell}^{-c_{i_\ell, -a}}.
$$
To prove \eqref{strpot=cluster} we compute using Proposition \ref{apotex} 
$$\stlpotiald\circ\gioti  (x)=\displaystyle\left(\prod_{b\in I}x^{-c_{-a,b}}_{b,m_b}\right)\displaystyle\sum_{\substack{k\in[N] \\ i_k=-a}}\frac{x_k}{x_{k^-}}\displaystyle\prod_{\ell=k+1}^{N} \left(\frac{x_{\ell^-}}{x_{\ell}}\right)^{-c_{i_{\ell}, -a}} 
=\restr{f^{B}_{a}}{\TTia}(x).$$
\end{proof}
As a direct corollary of Theorem \ref{string2} we obtain:
\begin{cor}\label{stringreg} For $a\in I$ the functions $\stlpotia$ and $\stlpotiald$ are regular.
\end{cor}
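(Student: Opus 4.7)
The strategy is to exhibit $\stlpotia$ and $\stlpotiald$ as pullbacks of Laurent polynomials under morphisms of tori, so that their regularity follows formally from Theorem \ref{string2}. Since $\stlpotiald = \stlpotia$ by Definition \ref{nulld}, the two claims collapse to showing that $\stlpotia$ is regular for each $a \in I$.

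Theorem \ref{string2} supplies the identity
\[
\stlpotia = \restr{W_a}{\TTix} \circ \gCAid.
\]
I would first observe that $\gCAid : \gTTis \to \TTix$ is a morphism of tori, since each coordinate component of the defining formula (immediately before Lemma \ref{transstringcluster}) is a Laurent monomial in $\lambda$ and $x$. Pullback along such a morphism manifestly sends Laurent polynomials on $\TTix$ to Laurent polynomials on $\gTTis$. It therefore suffices to check that $\restr{W_a}{\TTix}$ is itself a Laurent polynomial. For $a = i_N$ the expression in Proposition \ref{locpotGHKK1} is a single Laurent monomial and the conclusion is immediate; for $a \in -I$ the same proposition gives an explicit Laurent polynomial. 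For an arbitrary positive $a \in I$ with $a \neq i_N$, one either invokes the general fact from \cite{GHKK} that the global monomials $W_a$ attached to frozen vertices restrict to Laurent polynomials in every cluster chart (the Laurent phenomenon), or, more hands-on, passes from a seed $\jj$ with $j_N = a$ (where the description is a single monomial) back to $\ii$ through a sequence of $\mathcal{X}$-cluster mutations whose formulas \eqref{xmu} evidently preserve Laurent polynomiality.

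Combining these two steps exhibits $\stlpotia$ as the pullback of a Laurent polynomial under a morphism of tori, hence a Laurent polynomial on $\gTTis$ and in particular regular; the same conclusion transfers to $\stlpotiald$ via the equality $\stlpotiald = \stlpotia$. The only non-trivial input is the Laurent polynomiality of $\restr{W_a}{\TTix}$ at seeds not optimized for $w_a$, and this is the main (though entirely standard) obstacle: it rests on the Laurent phenomenon of cluster algebras combined with the existence of an optimized seed for every frozen vertex, the latter having already been arranged in Proposition \ref{locpotGHKK1}.
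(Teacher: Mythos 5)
Your main argument is correct and is essentially the paper's own (one-line) derivation: by Theorem \ref{string2} one has $\stlpotia=\restr{W_a}{\TTix}\circ\gCAid$, the map $\gCAid$ is a monomial homomorphism of tori, and $\restr{W_a}{\TTix}$ is a Laurent polynomial because $W_a$ is by construction (optimized seeds, \cite[Corollary 9.17]{GHKK}) a regular function on all of $\mathcal{X}$; together with $\stlpotiald=\stlpotia$ this yields the corollary. One caveat: your ``more hands-on'' alternative does not work as stated, since the $\mathcal{X}$-mutation formulas \eqref{xmu} do \emph{not} preserve Laurent polynomiality --- whenever $\langle e_k,e_\ell\rangle>0$ the pullback $\dual{\mu}_k^*X_\ell=X_\ell(1+X_k^{-1})^{-\langle e_k,e_\ell\rangle}$ acquires a genuine denominator, and the usual Laurent phenomenon is a statement about $\mathcal{A}$-cluster variables, not about arbitrary functions on $\mathcal{X}$-tori. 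The fact that $\restr{W_a}{\TTsex}$ is Laurent in \emph{every} chart is a special property of the global monomial $W_a$ (its regularity on $\mathcal{X}$, established in \cite{GHKK}), so only your first branch carries the argument for positive $a\neq i_N$.
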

Theorem \ref{string2} has the following implications for the interplay of parametrizations of canonical bases.
\begin{cor}\label{stringthm}
	We have the following equality of cones:
	\begin{align}\label{stringconesuperpot}
	\xci &=\tro{\gCAid} (\gsci),\\ 
	\gscild&=\tro{\gioti} \aci. \notag
	\end{align}
\end{cor}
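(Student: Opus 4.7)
The plan is to derive both equalities by direct tropicalization of Theorem \ref{string2}, together with the bijectivity of the coordinate changes $\gCAid$ and $\gioti$ granted by Lemma \ref{unicones}. The key observation is that tropicalization commutes with composition and, because the rational maps in sight are positive, turns sums into pointwise minima; consequently a cone cut out by the non-negativity of a positive rational map coincides with the intersection of the half-spaces cut out by its summands.

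For the first equality $\xci = \tro{\gCAid}(\gsci)$, I would first tropicalize $\stlpotia = \restr{W_{a}}{\TTix} \circ \gCAid$ for each $a \in \pm I$, obtaining $\tro{\stlpotia} = \tro{\restr{W_a}{\TTix}} \circ \tro{\gCAid}$. Since $W = \sum_{a \in \pm I} W_a$ is a positive sum, its tropicalization equals $\min_a \tro{\restr{W_a}{\TTix}}$, so a point $y \in \tro{\TTix}$ belongs to $\xci$ if and only if $\tro{\restr{W_a}{\TTix}}(y) \ge 0$ for every $a \in \pm I$. Writing $y = \tro{\gCAid}(\lambda, x)$, this is equivalent to $\tro{\stlpotia}(\lambda, x) \ge 0$ for every $a$, which by Proposition \ref{wstringpos}(2) characterizes membership in $\gsci$. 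Bijectivity of $\tro{\gCAid}$ from Lemma \ref{unicones} then yields $\xci = \tro{\gCAid}(\gsci)$.

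For the second equality $\gscild = \tro{\gioti}(\aci)$, the argument is dual. Tropicalizing $\restr{f^B_a}{\TTia} = \stlpotiald \circ \gioti$ and using the positive decomposition $f^B = \sum_{a \in \pm I} f^B_a$, a point $v \in \tro{\TTia}$ lies in $\aci$ if and only if $\tro{\stlpotiald}(\tro{\gioti}(v)) \ge 0$ for all $a$, which by the definition in Section on graded string cones for $\Lg$ is precisely the condition $\tro{\gioti}(v) \in \gscild$. Since $\tro{\gioti}$ is a bijection, the equality $\gscild = \tro{\gioti}(\aci)$ follows. There is no substantive obstacle here: all the content of the corollary is packaged in Theorem \ref{string2}, and once the elementary properties of tropicalization recalled in Section \ref{sec:trop} are applied, only a bookkeeping step remains.
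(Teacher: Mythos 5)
Your argument is correct and is essentially the paper's own proof: tropicalize Theorem \ref{string2}, use that $W$ and $f^B$ are sums of positive summands so that $\xci$ and $\aci$ are cut out by the individual inequalities, and conclude via Proposition \ref{wstringpos}(2) (resp.\ the definition of $\gscild$). One small caveat: when citing Lemma \ref{unicones} for the bijectivity of $\tro{\gCAid}$ and $\tro{\gioti}$, note that the paper's proof of that lemma invokes this very corollary for the cone identifications, so you should appeal only to its lattice-automorphism part (the triangular matrices with diagonal entries $\pm1$), which is independent of the corollary and is all your surjectivity step needs, avoiding any circularity.
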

\begin{proof}
	The statement follows by tropicalization Theorem \ref{string2} and Proposition \ref{wstringpos}.
\end{proof}
\begin{rem}
	For the special case that $\mathfrak{g}$ is of type $A$, Equality \eqref{stringconesuperpot} appeared in \cite{M} for the lexicographically minimal reduced word and in \cite{BF} for an arbitrary reduced word.
\end{rem}

\section{unimodularity of cones and polytopes}
In this section we view all cones as subsets of $\mathbb{R}^{n+N}$ and will refer to them by the same name as their integral analogs.

Let $m\in \mathbb{N}$ and $C_1,C_2\subset \mathbb{R}^m$ be two polyhedral cones. We call a bijection $f:C_1 \rightarrow C_2$ a \emph{unimodular isomorphism} of $C_1$ and $C_2$ if there exists a lattice isomorphism $g:\mathbb{Z}^m \rightarrow \mathbb{Z}^m$ such that $f=\restr{g}{C_1}$.

\begin{lem}\label{unicones} 
We have the following unimodular isomorphisms: 
\begin{align}
\tro{\giotid}&: \glci \rightarrow \xci, \label{firstequi}\\
\tro{\gCAid}&: \gsci \rightarrow \xci, \label{secequi}\\
\tro{\gCAi}&: \aci  \rightarrow \glcild, \label{thirdequi}\\
\tro{\gioti}&: \aci\rightarrow \gscild  .\label{fourequi}
\end{align}		
\end{lem}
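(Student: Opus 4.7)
Each of the four maps $\giotid$, $\gCAid$, $\gCAi$, $\gioti$ is a morphism of $(n+N)$-dimensional algebraic tori given componentwise by Laurent monomials, so its tropicalization is an integer-linear self-map of $\mathbb{Z}^{n+N}$. The plan is to show that each of these maps is in fact an isomorphism of algebraic tori; its tropicalization is then automatically a unimodular automorphism of $\mathbb{Z}^{n+N}$. Combined with the cone image identities $\xci = \tro{\giotid}(\glci)$, $\xci = \tro{\gCAid}(\gsci)$, $\glcild = \tro{\gCAi}(\aci)$, and $\gscild = \tro{\gioti}(\aci)$ from Corollaries~\ref{lusthm} and~\ref{stringthm}, injectivity of each tropicalization on the ambient lattice upgrades each surjection to a bijection that is the restriction of a $\mathbb{Z}$-lattice automorphism, yielding the four claimed unimodular isomorphisms.

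To exhibit each torus isomorphism, I would construct an explicit monomial inverse by telescoping the recursive defining formulas along the orbits of $k \mapsto k^+$ within each residue class. Take $\gioti$ as a model example: writing $(\mu, y) := \gioti(x)$ and fixing $a \in I$ with positions $k_1 < \cdots < k_{m_a}$ of $a$ in $[N]$ and $k_0 := -a$, the defining identities $\mu_a = x_{a, m_a}^{-1}$ and $y_{k_r} = x_{k_r}^{-1} x_{k_{r-1}}$ (for $r \geq 1$) telescope to
\begin{equation*}
x_{-a} = \mu_a^{-1} \prod_{s=1}^{m_a} y_{k_s}, \qquad x_{k_r} = \mu_a^{-1} \prod_{s=r+1}^{m_a} y_{k_s} \quad (1 \leq r \leq m_a),
\end{equation*}
which are Laurent monomials in $(\mu,y)$; hence $\gioti$ is a torus isomorphism. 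The analogous strategy handles $\giotid$: the identities $y_{-a} = x_{k^{\text{first}}_a}^{-1}$ read off $x_{k^{\text{first}}_a}$ in terms of $y$; telescoping the recursion $x_k = y_k x_{k^+}$ (with $x_k = y_k \lambda_{i_k^*}$ at the end of an orbit, i.e.\ when $k^+ = N+1$) then expresses each $x_k$ as a monomial in $y$ and $\lambda$; matching the two expressions for $x_{k^{\text{first}}_{a^*}}$ forces $\lambda_a = y_{-a^*}^{-1} \prod_{\ell : i_\ell = a^*} y_\ell^{-1}$, again a Laurent monomial.

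Orbit-wise inductive inversions of exactly the same flavor handle $\gCAi$ and $\gCAid$. The main obstacle is the combinatorial bookkeeping for these last two maps, whose components involve the symbol $\sym{\cdot}{\cdot}$ from \eqref{sym}; correctly collecting the contributions $+1$, $-1$, and $0$ along each orbit requires carefully invoking the quiver combinatorics of Section~\ref{redquiver}. No new ideas beyond this bookkeeping are needed, and the resulting explicit monomial inverses confirm that each of the four exponent matrices has determinant $\pm 1$, which completes the proof.
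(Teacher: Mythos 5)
Your proposal is correct and takes essentially the same route as the paper: the paper likewise reduces everything to showing that $\tro{\giotid}$, $\tro{\gCAid}$, $\tro{\gCAi}$, $\tro{\gioti}$ are lattice automorphisms of $\mathbb{Z}^{n+N}$ and then cites Corollaries \ref{lusthm} and \ref{stringthm}, the only difference being that it verifies unimodularity by noting that, after reordering coordinates, each exponent matrix is triangular with diagonal entries $\pm 1$, while you exhibit explicit monomial inverses by telescoping --- two ways of exploiting the same triangular structure. Your deferred bookkeeping for $\gCAi$ and $\gCAid$ does go through: their $k$-th components are $x_k^{-1}$ times a monomial in coordinates strictly on one side of $k$, and the $\lambda$-components determine the last-occurrence (respectively frozen) coordinates outright, so the same one-directional recursion yields the monomial inverse.
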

\begin{proof} 
Reordering the coordinates on $x\in[\gr\TTil]_{trop}\otimes_{\mathbb{Z}}\mathbb{R}$ as \begin{equation*}
(x_1,\ldots,x_N,\lambda_1,\ldots,\lambda_n)
\end{equation*} the definition of $\tro{\giotid}$ yields that the corresponding matrix has integer entries, is lower triangular and all diagonal entries are equal to $-1$, whereas the definition of $[\gr\CAi]_{trop}$ yields that the corresponding matrix has integer entries, is upper triangular and all diagonal entries are equal to $-1$. Hence $\tro{\giotid}$ and $[\gr\CAi]_{trop}$ are lattice automorphisms of $\mathbb{Z}^{n+N}$. Using Corollary \ref{lusthm}, Claim \ref{firstequi} and \ref{thirdequi} follow.

Reordering the coordinates on $x\in[\gr\TTis]_{trop}\otimes_{\mathbb{Z}}\mathbb{R}$ as $$(\lambda_1,\ldots,\lambda_n,x_1,\ldots,x_N)$$ the definition of $[\gr\CAid]_{trop}$ yields that the corresponding matrix has integer entries, is upper triangular and all diagonal entries are equal to $-1$. Hence $[\gr\CAid]_{trop}$ is a lattice automorphism of $\mathbb{Z}^{n+N}$. Using Corollary \ref{stringthm} Claim \ref{secequi} follows.

Reordering the coordinates on $y\in[\TTis]_{trop}\otimes_{\mathbb{Z}}\mathbb{R}$ as $$(x_1,\ldots,x_N,\lambda_n,\lambda_{n-1},\ldots,\lambda_1)$$ the definition of $\tro{\gioti}$ yields that the corresponding matrix has integer entries, is upper triangular and all diagonal entries in $\{-1,1\}$. Hence $\tro{\gioti}$ is a lattice automorphism of $\mathbb{Z}^{n+N}$. Using Corollary \ref{stringthm} Claim \ref{fourequi} follows.

\end{proof}
Using Lemma \ref{unicones} we deduce a unimodular isomorphism of the graded string cone and the graded cone of Lusztig's parametrization which can be found in the literature combining \cite[Corollaire 3.5]{MG}, \cite[Lemma 6.3]{CMMG}.
\begin{prop}\label{unilusstring} \begin{enumerate}
\item The map 
$\tro{\giotid^{-1} \circ \gCAid}$ is a unimodular isomorphism of $\gsci$ and $\glci$. Explicitly, it given by
\begin{align}
\begin{split}
(\lambda',x')&=\tro{\giotid^{-1} \circ \gCAid} (\lambda,x),\\
\lambda'_a&=\lambda_{a^*},\\
x'_k &=\lambda_{i_k} -x_k - \displaystyle\sum_{\ell > k} c_{i_k, i_{\ell}}x_{\ell}.\label{classuni}
\end{split}
\end{align}
\item 
The map $\tro{\gCAi\circ \gioti^{-1}}$ is a unimodular isomorphism of $\gscild$ and $\glcild$. Explicitly, it is given by
\begin{align}\begin{split}
(\lambda',x')&=\tro{\gCAi \circ \gioti^{-1}} (\lambda,x),\\
\lambda'_a&=\lambda_{a^*},\\
x'_k &=(\sum_{a\in I}c_{i_k,a}\lambda_a) -x_k - \displaystyle\sum_{\ell > k} c_{i_k, i_{\ell}}x_{\ell}. \label{langlandsdualuni}
\end{split}
\end{align}
\end{enumerate}
\end{prop}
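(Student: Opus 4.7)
The plan is to deduce each unimodular isomorphism by composition from Lemma~\ref{unicones}, and then extract the explicit formulas by a direct computation of the composition.

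For Part~(1), parts \eqref{firstequi} and \eqref{secequi} of Lemma~\ref{unicones} supply unimodular isomorphisms $\tro{\giotid}\colon\glci\to\xci$ and $\tro{\gCAid}\colon\gsci\to\xci$, each realized as the restriction of a $\mathbb{Z}$-lattice automorphism of $\mathbb{Z}^{n+N}$. Composing, the map $\tro{\giotid^{-1}\circ\gCAid}$ is again the restriction of such a lattice automorphism, and it sends $\gsci$ bijectively to $\glci$. By the definition at the opening of this section, this is precisely the required unimodular isomorphism. Part~(2) is identical in structure, using parts \eqref{thirdequi} and \eqref{fourequi} of Lemma~\ref{unicones} instead of \eqref{firstequi} and \eqref{secequi}.

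To extract the explicit formulas, I would compute the compositions coordinate by coordinate. For Part~(1), write $y:=\tro{\gCAid}(\lambda,x)$ and solve $\tro{\giotid}(\lambda',x')=y$. The defining relations of $\giotid$---namely $y_k=x'_k-x'_{k^+}$ for $k^+\in[N]$, $y_k=x'_k-\lambda'_{i_k^*}$ for $k^+=N+1$, and $y_{-a}=-x'_{(-a)^+}$---telescope along each chain of positions $\ell^a_1<\dots<\ell^a_{m_a}$ with $i_{\ell^a_j}=a$, yielding $x'_{\ell^a_j}=\lambda'_{a^*}+\sum_{s\geq j}y_{\ell^a_s}$; comparison with $x'_{\ell^a_1}=-y_{-a}$ then forces $\lambda'_{a^*}=-\sum_{k\in M,\,i_k=a}y_k$. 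Substituting the tropicalized formulas for $y_k$ (reading the exponents off the definition of $\gCAid$: $c_{i_\ell,i_k}+\sym{k}{\ell}$ for $k<0$ and $\sym{k}{\ell}$ for $k\in[N]$) and collecting contributions collapses the right-hand sides to \eqref{classuni}. Part~(2) is handled by the same strategy applied to $\gCAi$ and $\gioti^{-1}$; the Cartan-matrix factor appearing in \eqref{langlandsdualuni} traces back to the factor $\prod_b\lambda_b^{c_{a,b}}$ in the defining formula of $\strpotiald$ in Definition~\ref{nulld} (equivalently, to the $x_{b^*,m_{b^*}}$-factors built into $\gCAi$).

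The main technical hurdle is the combinatorial bookkeeping in the collapse step: one must verify that summing the $\sym{k}{\ell}$ contributions, combined with the appropriate Cartan entries $c_{i_\ell,i_k}$, over all $k\in M$ sharing a fixed Cartan label $i_k=a$ produces exactly the affine-linear expressions on the right-hand sides of \eqref{classuni} and \eqref{langlandsdualuni}. This reduces to a finite case analysis on the position of $\ell$ relative to the chain $\ell^{i_\ell}_1<\dots<\ell^{i_\ell}_{m_{i_\ell}}$ of positions sharing its Cartan label, a type of argument already used implicitly in the proofs of Lemmas~\ref{diagwlX} and~\ref{transstringcluster}.
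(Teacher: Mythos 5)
Your proposal is correct and follows essentially the same route as the paper: unimodularity comes from composing the isomorphisms of Lemma~\ref{unicones}, and the explicit formulas are obtained, exactly as in the paper's proof, by inverting $\giotid$ (resp.\ $\gioti$) via the telescoping products along the chains of positions with equal Cartan label and then substituting $\gCAid$ (resp.\ applying $\gCAi$) and tropicalizing. The only small inaccuracy is the aside attributing the Cartan factor in \eqref{langlandsdualuni} to $\strpotiald$: it in fact arises from the $\lambda_a^{-1}$-factors of the telescoped $\gioti^{-1}$ picked up through the exponents of $\gCAi$, but this remark is not a step of the argument and does not affect its validity.
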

\begin{proof}
The unimodularity follows from Lemma \ref{unicones}. It remains to show the explicit description of the maps. We define $k(a,s)$ by $v_{k(a,s)}:=v_{a,s}$. Then \eqref{classuni} follows by tropicalizing the identities
\begin{align*}
x'_{a,r} &= \prod_{r=0}^{s-1} (\tro{\gCAid} (\lambda, x))_{a,r}^{-1} = \lambda_a  x_{a,s}^{-1} \prod_{\ell> k(a,s)} x_\ell^{-c_{i_\ell, a}},\\
\lambda_a' &=\prod_{r=0}^{m_{a^*}} (\tro{\gCAid} (\lambda, x))_{a^*,r}^{-1} = 
\lambda_{a^*}.
\end{align*}

Using $\gioti^{-1}(\lambda,x)=\lambda_{a}^{-1}\displaystyle\prod_{s>r}x_{a,s}$ Equality \eqref{langlandsdualuni} follows by applying $\gCAi$ and tropicalizing.
\end{proof}

We call two polytopes $P_1,P_2 \subset \mathbb{R}^m$ \emph{affine unimodular isomorphic} if there exists a lattice isomorphism $g: \mathbb{Z}^m \rightarrow \mathbb{Z}^m$ and a vector $v\in \mathbb{Z}^m$ such that $g(P_1)+v=P_2$.

For $\lambda \in \mathbb{N}^I$ the polytope
$$P\sci(\lambda)=\{x\in \mathbb{N}^N \mid (\lambda,x)\in \gr\sci\}$$
is called the \emph{string polytope of weight $\lambda$}.
For $\lambda \in \mathbb{N}^I$ the polytope
$$P\lci (\lambda)=\{x\in \mathbb{N}^N \mid (\lambda,x)\in \glci\}$$
is called \emph{Lusztig's polytope of weight $\lambda$}.

\begin{prop} 
For $\lambda=(\lambda_1,\ldots,\lambda_n)\in \mathbb{N}^{N}$ and \\ $\lambda^*:=(\lambda_{1^*},\ldots, \lambda_{n^*})$, the polytopes $P\sci(\lambda)$ and $P\sci(\lambda^*)$ are affine unimodular isomorphic.
\end{prop}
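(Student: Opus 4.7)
The plan is to reduce the statement to a corresponding identification between Lusztig polytopes, and then invoke an explicit combinatorial lift of the natural duality $V(\lambda)^{*}\cong V(\lambda^{*})$.

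First I would invoke Proposition~\ref{unilusstring}~(1), which provides the affine unimodular isomorphism $\tro{\giotid^{-1}\circ\gCAid}\colon \gsci\to\glci$ acting on the weight coordinate by $\lambda\mapsto\lambda^{*}$. Restricting to the fiber over $\lambda$ gives an affine unimodular isomorphism $P\sci(\lambda)\cong P\lci(\lambda^{*})$. Applying the same map to the fiber over $\lambda^{*}$ (and using $\lambda^{**}=\lambda$) yields an affine unimodular isomorphism $P\sci(\lambda^{*})\cong P\lci(\lambda)$. It therefore suffices to produce an affine unimodular isomorphism
\[
P\lci(\lambda)\cong P\lci(\lambda^{*}).
\]

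For the Lusztig polytope analog, the plan is to realize the duality $V(\lambda)^{*}\cong V(\lambda^{*})$ at the level of Lusztig data. The canonical basis of $V(\lambda)^{*}$ is the dual of the canonical basis of $V(\lambda)$, which matches (up to sign) the canonical basis of $V(\lambda^{*})$. The induced bijection on crystal bases $B(\lambda)\to B(\lambda^{*})$ has an explicit description at the level of the parametrizing Lusztig data by combining the Kashiwara involution realized by \eqref{Kashinv} with the transition $\trod{\trl_{\ii}^{\ii^{\starop}}}$ and the coordinate reversal $(x_{1},\dots,x_{N})\mapsto(x_{N},\dots,x_{1})$. The composition is a priori only piecewise linear, but I would show that when restricted to the fiber $P\lci(\lambda)$ it is realized by a single linear piece, hence gives an affine unimodular map onto $P\lci(\lambda^{*})$.

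Concatenating the three affine unimodular isomorphisms
\[
P\sci(\lambda)\;\xrightarrow{\ \text{Prop.~\ref{unilusstring}(1)}\ }\;P\lci(\lambda^{*})\;\xrightarrow{\ \text{Lusztig duality}\ }\;P\lci(\lambda)\;\xrightarrow{\ \text{Prop.~\ref{unilusstring}(1)}^{-1}\ }\;P\sci(\lambda^{*})
\]
then yields the required affine unimodular isomorphism.

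The main obstacle is the middle step. Although the tropical transition map $\tro{\trl_{\ii}^{\ii^{\starop}}}$ has nontrivial tropical corners on all of $\lci$, the claim is that the defining inequalities of $P\lci(\lambda)$ force a single monomial to realize each tropical minimum uniformly across the polytope. Verifying this (and computing the resulting integer matrix, together with the translation vector which depends $\mathbb{Z}$-linearly on $\lambda$) is the technically delicate part; everything else is bookkeeping around Proposition~\ref{unilusstring}.
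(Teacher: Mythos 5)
Your first arrow is, by itself, the paper's entire proof, and it is all that is actually needed: the paper's argument simply restricts the unimodular isomorphism $\tro{\giotid^{-1}\circ\gCAid}\colon\gsci\to\glci$ of Proposition \ref{unilusstring}(1) to the slice $(\lambda,P\sci(\lambda))\subset\gsci$, which it sends onto $(\lambda^*,P\lci(\lambda^*))\subset\glci$, yielding an affine unimodular isomorphism $P\sci(\lambda)\cong P\lci(\lambda^*)$. The statement as printed contains a typo: the second polytope should be the Lusztig polytope $P\lci(\lambda^*)$, not the string polytope $P\sci(\lambda^*)$ (compare the introduction, which announces an affine unimodular map between the string and Lusztig weight polytopes). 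So for the intended statement your second and third arrows are superfluous.

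For the statement read literally (string polytope versus string polytope for the same word $\ii$), your chain has a genuine gap at the middle arrow $P\lci(\lambda)\cong P\lci(\lambda^*)$, and you acknowledge as much. The composite you propose (Kashiwara involution realized via \eqref{Kashinv}, the transition $\trod{\trl_{\ii}^{\ii^{\starop}}}$, and coordinate reversal) is only piecewise linear, and the key claim that the inequalities of $P\lci(\lambda)$ force a single linear piece is neither proved nor plausible: the tropical walls of $\trod{\trl_{\ii}^{\ii^{\starop}}}$ come from $3$-moves and generically cut through the interior of the weight polytope for regular $\lambda$, so no single monomial realizes the minima uniformly. There is also a conceptual mismatch: the Kashiwara $*$-involution is an involution of $B(\infty)$ and does not by itself realize the bijection $B(\lambda)\to B(\lambda^*)$ coming from $V(\lambda)^*\cong V(\lambda^*)$ (that is the Lusztig/Sch\"utzenberger involution), and the map \eqref{Kashinv} moreover passes through the word $\ii^{\starop}$, so it is not even set up as a self-map of $\ii$-Lusztig data in the form you use it. As written, the middle step would need an independent proof, and nothing in the paper supplies one; the correct repair is simply to prove (as the paper does) the string--Lusztig comparison $P\sci(\lambda)\cong P\lci(\lambda^*)$ by slicing Proposition \ref{unilusstring}(1).
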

\begin{proof}
Note that $$(\lambda,P\sci(\lambda))=\{(\lambda,x) \mid x \in P\sci(\lambda)\}\subset \gr\sci$$ and $$(\lambda^*,P\lci (\lambda^*))=\{(\lambda^*,x) \mid x \in P\lci (\lambda^*)\}\subset \glci.$$
By Proposition \ref{unilusstring}, we have $[(\gr\iota_{\ii}^{*})^{-1}\circ \CAid]_{trop}(\lambda,P\sci(\lambda))=(\lambda^*,P\lci (\lambda^*))$ and the claim follows.
\end{proof}

\section{Proof of Theorem \ref{thma}}
In this section we prove Theorem \ref{thma} describing explicitly the relation of the GHKK-potential and the decoration function.

\begin{proof}[Proof of Theorem \ref{thma}]
	Let $\mathcal{D}: \mathbb{G}_m^{I}\times \mathbb{G}_m^{N} \rightarrow \mathbb{G}_m^{I}\times \mathbb{G}_m^{N}$ be the regular map given by $\mathcal{D}(\lambda,x)=(\lambda',x)$ and
	$$\lambda'_a=\displaystyle\prod_{b\in I}\lambda^{c_{a,b}}_b.$$
For
$$
	\varphi_{\ii} :=  \gr\NAid \circ \mathcal{D}\circ \gr\CAi : \TTia \rightarrow \TTix
$$
we obtain by Theorem \ref{lempot} for $\ii\in\W$
	$$\restr{f_{B}}{\TTia}=\restr{W}{\TTix}\circ \varphi_{\ii}.$$

By \cite[Lemma 3.6]{Zel} the tori $(\TTia)_{\ii\in\W}$ cover $G^{\w0,e}$ up to codimension $2$. Consequently there is a regular map $\varphi: \mathcal{A} \rightarrow \mathcal{X}$ with $\restr{\varphi}{\TTia}=\varphi_{\ii}$ satisfying ${f_{B}}=W \circ \varphi.$
\end{proof}

Note that $\varphi$ is not injective.

By Theorem \ref{string2} we obtain that the map $\varphi':\mathcal{A} \rightarrow \mathcal{X}$ defined by 
$$
\varphi '_{\ii}:=\gr\CAid \circ \mathcal{D}\circ \gr\NAi : \TTia \rightarrow \TTix 
$$
also satisfies ${f_{B}} = W \circ \varphi'.$ We deduce from a straightforward geometric version of Proposition \ref{unilusstring} that 
$$\gr\CAid \circ \gr\NAi = \gr\NAid \circ \gr\CAi.$$ 
Since $\mathcal{D}$ commutes with $\gioti$ and $\gCAi$ we conclude $\varphi_{\ii}'=\varphi_{\ii}$ and thus by \cite[Lemma 3.6]{Zel} the functions $\varphi'$ and $\varphi$ coincide.

\begin{rem}
In \cite{Ge} the first author introduces a construction of the map $\varphi$ in type $A$ as a canonical modification of the $\pp$-map $\pp : \mathcal{A} \rightarrow \mathcal{X}$ given in \cite[Equation (6)]{FG}. In \cite{Ge} the map $\varphi$ occurs as a crucial ingredient in the explicit definition of a $B(\infty)$-crystal structure on the analogues of $\aci\subset \tro{\TTia}$ and $\xci\subset \tro{\TTix}$ for the reduced double Bruhat cell.
\end{rem}

\def\cprime{$'$} \def\cprime{$'$} \def\cprime{$'$} \def\cprime{$'$}

\end{document}